\documentclass[12pt,a4paper]{article}


\usepackage{amsmath, amsthm, amsfonts, amssymb,color,geometry,enumerate}
\usepackage{fancybox}
\usepackage{cases}
\usepackage{fancyhdr}
\usepackage[dvipdfmx]{graphicx}
\usepackage{amscd}


\geometry{margin=1.5cm}

\pagestyle{plain}

\theoremstyle{definition}


\newtheorem{theorem}{Theorem}[section]

\newtheorem{lemma}[theorem]{Lemma}
\newtheorem{definition}[theorem]{Definition}
\newtheorem{remark}[theorem]{Remark}

\newtheorem{example}[theorem]{Example}

\newtheorem{corollary}[theorem]{Corollary}


\makeatletter

\@addtoreset{equation}{section}
\makeatother


\renewcommand{\epsilon}{\varepsilon}    

\begin{document}

\title{{\bf Free infinite divisibility for generalized power distributions with free Poisson term}}
\author{\Large{Junki Morishita and Yuki Ueda}}
\date{}

\maketitle

\abstract{We study free infinite divisibility (FID) for a class which is called generalized power distributions with free Poisson term by using a complex analytic technique and a calculation for the free cumulants and Hankel determinants. In particular, our main result implies that (i) if $X$ follows the free Generalized Inverse Gaussian distribution, then $X^r$ follows an FID distribution when $|r|\ge1$, (ii) if $S$ follows the standard semicircle law and $u\ge 2$, then $(S+u)^r$ follows an FID distribution when $r\le -1$, and (iii) if $B_p$ follows the beta distribution with parameters $p$ and $3/2$, then (a) $B_p^r$ follows an FID distribution when $|r|\ge 1$ and $0<p\le 1/2$, and (b) $B_p^r$ follows an FID distribution when $r\le -1$ and $p>1/2$.}


\section{Introduction}

In classical probability theory, many people studied the infinite divisible distributions as laws of L\'{e}vy processes and as a general class of important distributions (e.g. normal, Cauchy, gamma, t-Student distributions, etc) for probability theory, mathematical statistics and finance models. A probability measure $\mu$ on $\mathbb{R}$ is said to be {\it infinitely divisible} if for each $n\in\mathbb{N}$ there exists a probability measure $\rho_n$ on $\mathbb{R}$ such that $\mu=\rho_n^{\ast n}$ where the operation $\ast$ is the classical convolution. We recall that a probability measure $\mu$ on $\mathbb{R}$ is infinitely divisible if and only if its characteristic function $\hat{\mu}$ has the L\'{e}vy-Khintchine representation:
\begin{equation}
\hat{\mu}(u)=\exp\left( i\eta u-\frac{1}{2}au^2+\int_\mathbb{R}\left( e^{iut}-1-iut1_{[-1,1]}(t)\right)d\nu(t) \right), \qquad u\in \mathbb{R},
\end{equation}
where $\eta\in\mathbb{R}$, $a\ge 0$ (called the gaussian component) and $\nu$ is a L\'{e}vy measure on $\mathbb{R}$, that is, $\nu(\{0\})=0$ and $\int_\mathbb{R}(1\wedge t^2)d\nu(t)<\infty$. The triplet $(\eta,a,\nu)$ is uniquely determined and is called the characteristic triplet. For example, the normal distribution $N(m,\sigma^2)$ ($m\in\mathbb{R}$, $\sigma>0$) has the characteristic triplet $(m,\sigma^2,0)$, (so that it is infinitely divisible). In general, it is very difficult to find the characteristic triplet of probability measures, (in other words, to write the characteristic functions in the L\'{e}vy-Khintchine representation). Many people found and studied subclasses of probability measures as criteria for infinite divisibility. In particular, subclasses of infinitely divisible distributions were studied to understand infinitely divisible distributions which are preserved by powers, products and quotients of independent random variables. We say that a random variable $X$ follows the class of mixture of exponential distributions (ME) if $X$ is of the form $EZ$ where two random variables $E$, $Z$ are independent, $E$ follows an exponential distribution and $Z$ is positive. By the Goldie-Steutel theorem, the class ME is a subclass of infinitely divisible distributions (see \cite{G67, S67}). The class is important to understand infinitely divisible distributions which is preserved by powers and products of independent random variables, that is, if $X$ follows the class ME then so is $X^r$ when $r\ge 1$, and if $X$, $Y$ are independent and follow the class ME then so is $XY$. As one of the most important subclasses of infinitely divisible distributions, the class GGC (generalized gamma convolution) was introduced by Thorin (see \cite{T77a, T77b}). Bondesson (see \cite{B92}) proved that the subclass HCM (hyperbolically completely monotone) of the class GGC is preserved by powers (if $X$ follows the class HCM then so is $X^r$ if $|r|\ge 1$), products and quotients of independent random variables. Moreover Bondesson (see \cite{B15}) proved that if $X,Y$ follow the GGC and $X,Y$ are independent then so is $XY$. However, we are not sure about whether the class is preserved by powers of random variables.

In free probability theory, we have the corresponding problems for free infinitely divisible distributions which will be defined in Section 2.2. As one of the most important subclasses of free infinitely divisible distributions, we have the class of Free Regular distributions (FR) and the class of Univalent Inverse Cauchy transform (UI). Firstly, P\'{e}rez-Abreu and Sakuma (\cite{PS12}) introduced the class of free regular (FR) probability measures in terms Bercovici-Pata bijection. In \cite{AHS13}, the class FR was developed as a characterization of nonnegative free L\'{e}vy processes. Next, the class UI was introduced by Arizmendi and Hasebe (see \cite{AH13}) as a subclass of free infinitely divisible distributions. We will explain about the class UI in Section 2.3 We are not completely sure about whether the classes FR and UI are closed with respect to powers, products and quotients of free independent random variables. Hasebe (\cite{Has16}) studied the powers and products for the classes FR and UI by using complex analytic techniques. 

In this paper, we studied the class UI, specifically, we proved that if a random variable $X$ follows the class of the generalized power distributions with free Poisson term (GPFP)
\begin{equation}\label{X}
X\sim \frac{\sqrt{(b-x)(x-a)}}{x}\sum_{k=1}^N \frac{\alpha_k}{x^{l_k}} 1_{(a,b)}(x)dx,
\end{equation}
where $0<a<b$, $N\in\mathbb{N}$, $\alpha=(\alpha_1,\dots ,\alpha_N)\in (0,\infty)^N$ satisfying that the right hand side of \eqref{X} becomes a probability density function, and $l\in \mathbb{R}_<^N:=\{(l_1,\dots ,l_N)\in \mathbb{R}^N: l_1<\dots <l_N\}$, then (i) $X^r$ follows a UI distribution if $r\ge 1$ in the case when $l\in[t,t+1]_<^N$ for some $t\ge0$, and (ii) $X^r$ follows a UI distribution if $r\le-1$ in the case when $l\in[0,1]_<^N$. By the way, due to a calculation of Hankel determinants for the free cumulants, we proved that (iii) there exist five parameters $(a,b,N,\alpha, l)$ with $0<a<b$, $N\in\mathbb{N}$, $\alpha\in(0,\infty)^N$, $l\in [t,t+1]_<^N$ for some $t>0$ such that $X\sim \text{GPFP}(a,b,N,\alpha,l)$, but $X^{-1}$ does not follow an FID distribution and (iv) there exist five parameters with $ l_1<\dots <l_{k-1}\le l_1+1<l_k<\dots<l_N$, $N\ge2$ ($a,b,\alpha$ are usual parameters) for some $1<k\le N$, such that $X$ does not follow an FID distribution. The class of GPFP contains free Poisson distributions which have no atoms at $0$, free positive stable laws with index $1/2$, the class of free Generalized Inverse Gaussian distributions (fGIG), shifted semicircle laws and a partial class of beta distributions. From our results, we obtain some properties for powers of random variables which follow the fGIG distributions, shifted semicircle laws, and beta distributions.

This paper consists of 5 sections. In section 2, we introduce the free infinitely divisible distributions and the class UI. In section 3, we prove that probability measures whose pdf satisfied some assumptions are in the class UI by using complex analytic techniques. In section 4, we prove our results (i) and (ii) as above by using results of section 3 and give some examples. In section 5, we prove our results (iii) and (iv) as above by using the Hankel determinants for free cumulants.

\section{Preliminaries}

\subsection{Notations}
In this paper, we use the following notations.
\begin{itemize}
\item Let $X$ be a (non-commutative) random variable and $\mu$ a probability measure on $\mathbb{R}$. The notation $X\sim \mu$ means that $X$ follows $\mu$. 
\item The domain $\mathbb{C}^+$ (resp. $\mathbb{C}^-$) means the complex upper (resp. lower) half plane.
\item Let $I$ be an interval in $\mathbb{R}$. The notation $I\pm i0$ means the set $\{x\pm i0: x\in I\}$.
\item The complex function $z^p$ ($p\in\mathbb{R}$) means the principal value defined on $\mathbb{C}\setminus (-\infty,0]$. The complex function $\arg(z)$ means the argument of $z$ defined on $\mathbb{C}\setminus(-\infty,0]$ taking values in $(-\pi,\pi)$.
\item Let $N$ be a positive integer and $I$ an interval in $\mathbb{R}$. The set $I_<^N$ means a cone of $I^N$, that is, the set of all elements $(l_1,\dots ,l_N)\in I^N$ satisfying $l_1<\dots <l_N$.
\end{itemize}

\subsection{Analytic tools in free probability theory}
Let $\mu$ be a probability measure on $\mathbb{R}$. The {\it Cauchy transform} of $\mu$ is defined by
\begin{equation}\label{OriginalCauchy}
G_\mu(z):=\int_\mathbb{R} \frac{1}{z-x}d\mu(x), \qquad z\in \mathbb{C}\setminus \text{supp}(\mu).
\end{equation}
In particular, if $X\sim \mu$ then we sometimes write $G_X$ as the Cauchy transform of $\mu$. The function $G_\mu$ is analytic on the complex upper half plane $\mathbb{C}^+$. Bercovici and Voiculescu (\cite[Proposition 5.4]{BV93}) proved that for any $\gamma>0$ there exist $\lambda,M,\delta>0$ such that $G_\mu$ is univalent in the truncated cone
\begin{equation}
\Gamma_{\lambda,M}:=\{z\in \mathbb{C}^+: \lambda|\text{Re}(z)|<\text{Im}(z), \text{Im}(z)>M\},
\end{equation}
and the image $G_\mu(\Gamma_{\lambda,M})$ contains the triangular domain 
\begin{equation}
\Lambda_{\gamma,\delta}:=\{z\in\mathbb{C}^-: \gamma|\text{Re}(z)|< -\text{Im}(z), \text{Im}(z)>-\delta\}.
\end{equation} 
Therefore the right inverse function $G_\mu^{-1}$ exists on $\Lambda_{\gamma,\delta}$. We define the {\it Voiculescu transform} of $\mu$ by
\begin{equation}\label{Voi}
\phi_\mu(z):=G_\mu^{-1}\left(\frac{1}{z}\right)-z, \qquad \frac{1}{z}\in\Lambda_{\gamma,\delta} .
\end{equation}
It was proved in \cite{BV93} (historically, see also \cite{M92, V86}) that for any probability measures $\mu$ and $\nu$ on $\mathbb{R}$, there exists a unique probability measure $\lambda$ on $\mathbb{R}$ such that
\begin{equation}
\phi_\lambda(z)=\phi_\mu(z)+\phi_\nu(z),
\end{equation}
for all $z$ in the intersection of the domains of the three transforms. We write $\lambda:=\mu \boxplus \nu$ and call such $\lambda$ the {\it additive free convolution} (for short, {\it free convolution}) of $\mu$ and $\nu$. The operation $\boxplus$ is the free analogue of the classical convolution $\ast$. The {\it free cumulant transform} of $\mu$ is defined by
\begin{equation}
C_\mu(z):=z\phi_\mu\left(\frac{1}{z}\right), \qquad z\in \Lambda_{\gamma,\delta}.
\end{equation}
In particular, if $X\sim \mu$ then we sometimes write $C_X$ as the free cumulant transform of $\mu$. By definition, we have that $C_{\mu\boxplus\nu}(z)=C_\mu(z)+C_\nu(z)$ for all $z$ in the intersection of the domains of the three transforms. The transform is the free analogue of $\log\hat{\mu}$.

A probability measure $\mu$ on $\mathbb{R}$ is said to be {\it free infinitely divisible} if for each $n\in\mathbb{N}$ there exists a probability measure $\rho_n$ on $\mathbb{R}$ such that $\mu=\rho_n^{\boxplus n}$. Bercovici and Voiculescu (\cite[Theorem 5.10]{BV93}) proved that $\mu$ is free infinitely divisible if and only if the Voiculescu transform $\phi_\mu$ has analytic continuation to a map from $\mathbb{C}^+$ taking values in $\mathbb{C}^-\cup \mathbb{R}$, and therefore it has the {\it Pick-Nevanlinna representation}
\begin{equation}
\phi_\mu(z)= \gamma+\int_\mathbb{R} \frac{1+xz}{z-x}d\sigma(x), \qquad z\in\mathbb{C}^+,
\end{equation}
for some $\gamma\in\mathbb{R}$ and a nonnegative finite measure $\sigma$ on $\mathbb{R}$. The pair $(\gamma,\sigma)$ is uniquely determined and called the {\it free generating pair}. In this case, we can rewrite the free cumulant transform $C_\mu$ as
\begin{equation}\label{LKrep}
C_\mu(z)=\eta z+ az^2+ \int_\mathbb{R} \left(\frac{1}{1-tz}-1-tz 1_{[-1,1]}(t)\right) d\nu(t), \qquad z\in\mathbb{C}^-,
\end{equation}
where $\eta\in\mathbb{R}$, $a\ge 0$ (called the {\it semicircular component}) and $\nu$ is a nonnegative measure on $\mathbb{R}$ satisfying $\nu(\{0\})=0$ and $\int_\mathbb{R} (1\wedge t^2) d\nu(t)<\infty$ (see \cite[Proposition 5.2]{BT02}). The measure $\nu$ is called the {\it free L\'{e}vy measure} of $\mu$. The representation \eqref{LKrep} is called the {\it free L\'{e}vy-Khintchine representation} and it is the free analogue of classical L\'{e}vy-Khintchine representation of the Fourier transform $\hat{\mu}$. The triplet $(\eta, a,\nu)$ is uniquely determined by $\mu$ and called the {\it free characteristic triplet}. Moreover the relation between the free generating pair $(\gamma,\sigma)$ and the free characteristic triplet $(\eta,a,\nu)$ is given by
\begin{equation}\label{FC-FG}
\begin{split}
a&=\sigma(\{0\}),\\
d\nu(t)&=\frac{1+t^2}{t^2}\cdot 1_{\mathbb{R}\setminus \{0\}}(t) d\sigma(t),\\
\eta=&\gamma+\int_\mathbb{R}t\left(1_{[-1,1]}(t)-\frac{1}{1+t^2} \right)d\nu(t).
\end{split}
\end{equation}

\begin{example}\label{cumulanttrans}
(1) The semicircle distribution $S(m,\sigma^2)$ is a probability measure with the following probability density function:
\begin{equation}
\frac{1}{2\pi\sigma^2}\sqrt{4\sigma^2-(x-m)^2}1_{(m-2\sigma,m+2\sigma)}(x), \qquad m\in\mathbb{R},\sigma>0.
\end{equation}
It is known (for example, see \cite{NS06}) that
\begin{equation}
C_{S(m,\sigma^2)}(z)=mz+\sigma^2z^2, \qquad z\in\mathbb{C}^-.
\end{equation}
Therefore $S(m,\sigma^2)$ is free infinitely divisible and has the free characteristic triplet $(m,\sigma^2,0)$. This distribution appears as the limit of eigenvalue distributions of Wigner matrices as the size of the random matrices
goes to infinity. 

(2) The free Poisson distribution (or Marchenko-Pastur distribution) ${\bf fp}(p,\theta)$ is a probability measure given by
\begin{equation}
\max\{1-p,0\}\delta_0+\frac{\sqrt{\left(\theta(\sqrt{p}+1)^2-x\right)\left(x-\theta(\sqrt{p}-1)^2\right)}}{2\pi x}1_{(\theta(\sqrt{p}-1)^2,\theta(\sqrt{p}+1)^2)}(x)dx
\end{equation}
for $p,\theta>0$. It has a free cumulant transform
\begin{equation}
C_{{\bf fp}(p,\theta)}(z)=\frac{p\theta z}{1-\theta z}.
\end{equation}
Therefore ${\bf fp}(p,\theta)$ is free infinitely divisible. This distribution appears as the limit of eigenvalue distributions of Wishart matrices as the size of the random matrices
goes to infinity. 

\end{example}

\subsection{Univalent Inverse Cauchy transform}

In section 2.2, we introduced the free infinitely divisible distributions. Free infinite divisibility for probability measures is equivalent to that its free cumulant transform has the free L\'{e}vy-Khintchine representation \eqref{LKrep} (or its Voiculescu transform has the Pick-Nevanlinna representation). However, in general, it is very difficult to find the free characteristic triplet (or free generating pair) of probability measures to check free infinite divisibility for probability measures. Many people studied conditions of free infinite divisibility.

In particular, Arizmendi and Hasebe (\cite[Definition 5.1]{AH13}) defined a special class of probability measures as a criterion for free infinite divisibility.

\begin{definition}
A probability measure $\mu$ on $\mathbb{R}$ is said to be in the class UI (Univalent Inverse Cauchy transform) if the right inverse function $G_\mu^{-1}$, originally defined in a triangular domain $\Lambda_{\gamma,\delta}$, has univalent analytic continuation to $\mathbb{C}^-$. In particular, if $X\sim \mu$ and $\mu\in$ UI, then we write $X\sim$ UI.
\end{definition}

In \cite[Proposition 5.2 and p.2763]{AH13}, the class UI has the following properties.

\begin{lemma}\label{UI}
$\mu\in$ UI implies that $\mu$ is free infinitely divisible, and the class UI is closed with respect to weak convergence.
\end{lemma}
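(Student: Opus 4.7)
The plan is to establish the two assertions separately. The first reduces, via the Bercovici--Voiculescu criterion recalled in Section~2.2, to producing an analytic extension of the Voiculescu transform $\phi_\mu$ to $\mathbb{C}^+$ whose imaginary part is nonpositive; the second is a normal-family argument combined with Hurwitz's theorem.

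For the first claim, suppose $\mu\in\mathrm{UI}$ and let $H:\mathbb{C}^-\to\mathbb{C}$ denote the univalent analytic continuation of $G_\mu^{-1}$. The first substantive step is to show $H(\mathbb{C}^-)\subseteq\mathbb{C}^+$. The asymptotic expansion $G_\mu(z)=1/z+O(1/z^2)$ at infinity gives $H(w)\sim 1/w$ as $w\to 0$ in $\mathbb{C}^-$, so $H$ takes values in $\mathbb{C}^+$ on a punctured neighbourhood of $0$ in $\mathbb{C}^-$. I would then show that the open set $\Omega:=\{w\in\mathbb{C}^-:H(w)\in\mathbb{C}^+\}$ is all of $\mathbb{C}^-$: if some $w_0\in\mathbb{C}^-$ satisfied $H(w_0)\in\mathbb{C}^-$, the identity $G_\mu\circ H=\mathrm{id}$ (valid on $\Omega$ by construction and extended by analytic continuation along a path in $\mathbb{C}^-$ avoiding the thin set $H^{-1}(\supp\mu)$, which is possible by a small perturbation since $H$ is univalent and $\supp\mu\subset\mathbb{R}$) combined with the Schwarz-symmetry fact $G_\mu(\mathbb{C}^-)\subseteq\mathbb{C}^+$ yields the contradiction $w_0=G_\mu(H(w_0))\in\mathbb{C}^+$; the boundary case $H(w_0)\in\mathbb{R}$ is ruled out by the open mapping theorem. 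Once $H(\mathbb{C}^-)\subseteq\mathbb{C}^+$ is in hand, define $\phi_\mu(z):=H(1/z)-z$ for $z\in\mathbb{C}^+$. Passing to the reciprocal Cauchy transform $F_\mu(z):=1/G_\mu(z)$, the relation $H(1/z)=F_\mu^{-1}(z)$ gives $\phi_\mu(z)=F_\mu^{-1}(z)-z$. Since $F_\mu$ is a self-map of $\mathbb{C}^+$ with Nevanlinna representation $F_\mu(w)=\alpha+w+\int(1+xw)/(x-w)\,d\rho(x)$ (the linear coefficient equals $1$ because $\mu$ is a probability measure), one computes $\im F_\mu(w)\ge\im w$ on $\mathbb{C}^+$; substituting $w=F_\mu^{-1}(z)$ yields $\im\phi_\mu(z)=\im F_\mu^{-1}(z)-\im z\le 0$, which is the required extension property, and the Bercovici--Voiculescu criterion then delivers FID.

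For the second claim, let $\mu_n\in\mathrm{UI}$ with $\mu_n\to\mu$ weakly, and let $H_n:\mathbb{C}^-\to\mathbb{C}^+$ be the univalent extensions furnished by the first part. Because the target $\mathbb{C}^+$ is a hyperbolic domain, $\{H_n\}$ is a normal family on $\mathbb{C}^-$ by Montel's theorem. Weak convergence yields locally uniform convergence $G_{\mu_n}\to G_\mu$ on $\mathbb{C}^+$; together with tightness of $\{\mu_n\}$ this allows one to fix a common truncated triangular domain $\Lambda_{\gamma,\delta}$ on which $H_n=G_{\mu_n}^{-1}\to G_\mu^{-1}$ locally uniformly for all sufficiently large $n$. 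Any subsequential locally-uniform limit $H$ of $\{H_n\}$ on $\mathbb{C}^-$ then coincides with $G_\mu^{-1}$ on $\Lambda_{\gamma,\delta}$, hence is nonconstant, so Hurwitz's theorem forces $H$ to be univalent on $\mathbb{C}^-$, and it extends $G_\mu^{-1}$. The identity theorem yields uniqueness of the subsequential limit, so the full sequence $H_n$ converges to the univalent extension of $G_\mu^{-1}$, giving $\mu\in\mathrm{UI}$.

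The most delicate step is the verification that $H(\mathbb{C}^-)\subseteq\mathbb{C}^+$ in the first assertion: the identity $G_\mu\circ H=\mathrm{id}$ must be propagated along paths in $\mathbb{C}^-$ while dodging the exceptional real-analytic set $H^{-1}(\supp\mu)$, and some care is needed to rule out $H$ taking real values in the interior. A secondary obstacle in the weak-convergence part is arranging a common truncated triangular domain $\Lambda_{\gamma,\delta}$ uniformly in $n$, which relies on tightness-type control of the Bercovici--Voiculescu cone and triangle constants.
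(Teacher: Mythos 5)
The paper does not actually prove this lemma; it is quoted from Arizmendi--Hasebe \cite[Proposition 5.2 and p.~2763]{AH13}, so your attempt has to be judged on its own merits. There is a genuine error in your first part: the ``first substantive step'' $H(\mathbb{C}^-)\subseteq\mathbb{C}^+$ is false. Take $\mu=S(0,1)$, which the paper lists as a UI distribution: here $G_\mu^{-1}(w)=w+1/w$, which is univalent on $\mathbb{C}^-$, yet $H(-2i)=-2i+i/2=-\tfrac{3}{2}i\in\mathbb{C}^-$. The place where your argument breaks is exactly the path-continuation step. For the semicircle law one computes $\im H(w)=\im(w)\left(1-|w|^{-2}\right)$, so $H^{-1}(\supp\mu)\cap\mathbb{C}^-$ is the lower unit semicircle $\{|w|=1\}\cap\mathbb{C}^-$, which is mapped onto $[-2,2]=\supp\mu$ and \emph{separates} $\mathbb{C}^-$ into $\{|w|<1\}$ and $\{|w|>1\}$. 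No perturbation of a path from a neighbourhood of $0$ to $w_0=-2i$ can avoid this set, and once $H$ crosses the support the identity that continues analytically is not $G_\mu\circ H=\mathrm{id}$ with the honest Cauchy transform but the one involving the continuation of $G_\mu$ through $\supp\mu$ (which, for an absolutely continuous $\mu$ with analytic density $f$, differs from $G_\mu$ by $-2\pi i f$; cf.\ Lemma \ref{Has}). Since $H(1/z)$ need not lie in $\mathbb{C}^+$, the subsequent chain $\im\phi_\mu(z)=\im F_\mu^{-1}(z)-\im z\le 0$ is not justified either: the Nevanlinna inequality $\im F_\mu(w)\ge\im w$ only controls genuine preimages $w\in\mathbb{C}^+$, not the values of the analytic continuation. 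So the first assertion is not proved by your route; the actual argument in \cite{AH13} establishes $\im\phi_\mu\le0$ on all of $\mathbb{C}^+$ without the (false) inclusion $H(\mathbb{C}^-)\subseteq\mathbb{C}^+$.

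The second part inherits the same defect: you invoke Montel's theorem on the grounds that the $H_n$ map into the hyperbolic domain $\mathbb{C}^+$, which they do not. This part is repairable: normality of a family of univalent functions on $\mathbb{C}^-$ follows from the Koebe distortion theorem once $H_n(w_0)$ and $H_n'(w_0)$ (and $1/H_n'(w_0)$) are bounded at one point $w_0$ of a common truncated triangle $\Lambda_{\gamma,\delta}$, and such a common triangle together with $G_{\mu_n}^{-1}\to G_\mu^{-1}$ there is exactly what tightness plus locally uniform convergence $G_{\mu_n}\to G_\mu$ provides. With that substitution, your Hurwitz/identity-theorem argument for closedness under weak convergence is sound. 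But as written, both halves of the proposal rest on the inclusion $H(\mathbb{C}^-)\subseteq\mathbb{C}^+$, which already fails for the most basic member of the class UI.
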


\begin{example}\label{UIex}
(1) The semicircle distribution $S(m,\sigma^2)$ is in the class UI. The free Poisson distribution ${\bf fp}(p,\theta)$ is in the class UI (see \cite[Section 2.3 (1),(2)]{Has16}).

(2) The beta distribution $\beta_{p,q}$ is the probability measure with pdf
\begin{equation}
\frac{1}{B(p,q)}x^{p-1}(1-x)^{q-1} 1_{(0,1)}(x), \qquad p,q>0.
\end{equation}
The beta distribution $\beta_{p,q}$ is in the class UI if (i) $p,q\ge 3/2$, (ii) $0<p\le 1/2$, $q\ge 3/2$ or (iii) $0<q\le1/2$, $p\ge 3/2$ (see \cite[Theorem 1.2]{Has14} and \cite[Theorem 3.4]{Has16}).
\end{example}


\section{UI property}

In this section, $\tilde{G}_\mu(z)$ denotes the Cauchy transform  of a complex measure $\mu$ on $\mathbb{R}$ defined in \eqref{OriginalCauchy}. In \cite[Proposition 4.1]{Has14}, Hasebe proved that the Cauchy transform has an analytic continuation to a domain $\mathbb{C}^+\cup \text{supp}(\mu)\cup D$, for some subdomain $D$ of $\mathbb{C}^-$ in the case when a pdf of $\mu$ has some analytic properties.
\begin{lemma}\label{Has}
Consider $0<a<b$. Let $(a,b)$ be an interval of $\mathbb{R}$.  Suppose that $f$ is analytic in a neighborhood of $(a,b)\cup \{z\in\mathbb{C}^-: \arg(z)\in(-\theta,0)\}$ for some $0<\theta\le\pi$ and that $f$ is integrable on $(a,b)$ with respect to the Lebesgue measure. We define the complex measure $\sigma(dx):=f(x)1_{(a,b)}(x)dx$. Then the Cauchy transform $G_\sigma:=\tilde{G}_\sigma|_{\mathbb{C}^+}$ defined on $\mathbb{C}^+$ has an analytic continuation to $\mathbb{C}^+\cup(a,b)\cup\{z\in\mathbb{C}^-: \arg(z)\in(-\theta,0)\}$, which we denote by the same symbol $G_\sigma$, and
\begin{equation}\label{Cauchy}
G_\sigma(z)=\tilde{G}_\sigma(z)-2\pi i f(z), \qquad z\in\{z\in\mathbb{C}^-: \arg(z)\in(-\theta,0)\}.
\end{equation}
\end{lemma}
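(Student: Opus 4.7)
The plan is to define the extension of $\tilde{G}_\sigma|_{\mathbb{C}^+}$ explicitly by formula \eqref{Cauchy} on the wedge $W:=\{z\in\mathbb{C}^-:\arg(z)\in(-\theta,0)\}$ and then verify analytic gluing across the segment $(a,b)$ using the Sokhotski--Plemelj jump formula together with Morera's theorem.

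First, I would observe that $\tilde{G}_\sigma(z)=\int_a^b f(x)/(z-x)\,dx$ is holomorphic on $\mathbb{C}\setminus[a,b]$, because $f$ is integrable on $(a,b)$ and the kernel $1/(z-x)$ is uniformly bounded with bounded derivatives in $z$ on compact subsets of $\mathbb{C}\setminus[a,b]$, so that differentiation under the integral sign applies. In particular $\tilde{G}_\sigma$ is analytic on $W$, and since $f$ is analytic on $W$ by hypothesis, so is $\tilde{G}_\sigma-2\pi i f$. I then define
\[
G_\sigma(z):=\begin{cases}\tilde{G}_\sigma(z), & z\in\mathbb{C}^+,\\ \tilde{G}_\sigma(z)-2\pi i f(z), & z\in W,\end{cases}
\]
so that \eqref{Cauchy} holds on $W$ by construction.

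Next, I would invoke the Sokhotski--Plemelj jump formula: since $f$ is analytic (hence H\"older continuous) in a neighborhood of every $x_0\in(a,b)$, one has
\[
\tilde{G}_\sigma(x_0+i0)-\tilde{G}_\sigma(x_0-i0)=-2\pi i f(x_0).
\]
Substitution gives $\lim_{\mathbb{C}^+\ni z\to x_0}G_\sigma(z)=\tilde{G}_\sigma(x_0+i0)=\tilde{G}_\sigma(x_0-i0)-2\pi i f(x_0)=\lim_{W\ni z\to x_0}G_\sigma(z)$, so the two pieces agree and $G_\sigma$ extends continuously across $(a,b)$.

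Finally, for each $x_0\in(a,b)$ I would choose an open disk $D$ centered at $x_0$ so small that $D\subset \mathbb{C}^+\cup(a,b)\cup W$ (possible because the argument of any point of $D\cap\mathbb{C}^-$ tends to $0$ as $D$ shrinks, hence lies in $(-\theta,0)$) and $f$ is analytic on $D$. On $D$ the function $G_\sigma$ is continuous and analytic off the horizontal segment $D\cap(a,b)$, so Morera's theorem applied to triangles in $D$ (with the contribution of sides crossing the segment vanishing by continuity) yields analyticity on the whole disk $D$. This promotes the piecewise definition to a single analytic function on $\mathbb{C}^+\cup(a,b)\cup W$ that continues $\tilde{G}_\sigma|_{\mathbb{C}^+}$. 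The main subtlety is to make the gluing across the segment rigorous via Sokhotski--Plemelj (which in turn rests on the pointwise regularity of $f$ on $(a,b)$); once continuity is in hand, the Morera/Painlev\'e step and the formula \eqref{Cauchy} are immediate.
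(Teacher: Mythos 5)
The paper itself contains no proof of this lemma: it is imported verbatim from \cite[Proposition 4.1]{Has14}, so there is no in-paper argument to compare against. Your proof is correct and self-contained, and it follows a different (though equally standard) route from the cited source. Hasebe obtains the continuation by contour deformation: for $z$ in the wedge one replaces the integration path $(a,b)$ by a path dipping into the region where $f$ is analytic, and the residue picked up at $w=z$ produces the term $-2\pi i f(z)$, giving \eqref{Cauchy} and the analyticity across $(a,b)$ simultaneously, with no boundary-value analysis. You instead define the extension by the formula on the wedge and glue along $(a,b)$ via Sokhotski--Plemelj plus Morera. Both work; the deformation argument is slightly more economical because it never needs the fact (true, but not entirely free --- it is the Plemelj--Privalov regularity of the boundary values) that the one-sided limits of $\tilde G_\sigma$ are attained continuously and locally uniformly, which is exactly the point you flag but do not fully discharge and which Morera's theorem genuinely requires. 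If you want to keep your gluing structure while avoiding that issue, write, for $z$ in a small convex disk $D$ about $x_0\in(a,b)$ on which $f$ is analytic, $\tilde G_\sigma(z)=\int_a^b\frac{f(x)-f(z)}{z-x}\,dx+f(z)\log\frac{z-a}{z-b}$; the first term is analytic on all of $D$ and the logarithm has an explicit jump of $-2\pi i$ across $(a,b)$, which yields both the continuous matching and \eqref{Cauchy} at once. Finally, note that your choice of disk uses $a>0$ so that points of $D\cap\mathbb{C}^-$ have argument in $(-\theta,0)$; that hypothesis is indeed part of the lemma, so this is fine, but it is worth making explicit.
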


We prove that a probability measure whose pdf satisfies the following properties is in the class UI and therefore the measure is free infinitely divisible.

\begin{theorem}\label{lemma}
Suppose that $0<a<b$. Let $\mu$ be a probability measure on $(a,b)$ which has a pdf $f(x)$ where $f$ is real analytic, positive and there exists $\theta\in (0,\pi]$ such that\\
{\bf (A1)} $f$ analytically extends to a function (also denoted by $f$) defined in $\{z\in\mathbb{C}\setminus\{0\}: \arg (z)\in(-\theta,0)\}\cup(a,b)$. Moreover $f$ extends to a continuous function on $\{z\in\mathbb{C}\setminus \{0\}: \arg(z)\in[-\theta,0]\}$;\\
{\bf (A2)} Re$(f(x-i0))=0$ for all $0<x<a$ and $x>b$; \\
{\bf (A3)} There exist $\alpha>0$ and $0<l<\frac{2\pi-\theta}{\theta}$ such that
\begin{equation}
f(z)=-\frac{i\alpha}{z^{1+l}}(1+o(1)), \qquad \text{as } z\rightarrow0, \hspace{2mm}\arg (z) \in (-\theta,0);
\end{equation}
{\bf (A4)} Re$(f(ue^{-i \theta}))\le 0$ (if $\theta=\pi$, then Re$(f(-u))\le0$) for all $u>0$;\\
{\bf (A5)} $\lim_{|z|\rightarrow \infty, \arg(z)\in(-\theta,0)} f(z)=0$.\\\vspace{-0.3cm}

By Lemma \ref{Has} and our assumption (A1), the Cauchy transform $G(z):=G_\mu(z)$ of $\mu$ has analytic continuation to $\mathbb{C}^+ \cup(a,b) \cup \{z\in\mathbb{C}^-: \arg(z)\in(-\theta,0)\}$, and we denote by the same symbol $G$. Then \\\vspace{-0.3cm}\\
{\bf (A6)} $G(z)$ can extend to a continuous function on $\mathbb{C}^+\cup[a,b]\cup\{z\in\mathbb{C}\setminus\{0\}: \arg(z)\in[-\theta,0)\}\cup((-\infty,a]\cup[b,\infty)+i0)\cup([0,a]\cup[b,\infty)-i0)$.
\\\vspace{-0.4cm}\\
Then $\mu\in$ UI.
\end{theorem}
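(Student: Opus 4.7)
The plan is to prove $\mu\in\mathrm{UI}$ by showing that $G=G_\mu$ restricted to the simply connected domain
\[
\Omega:=\mathbb{C}^+\cup(a,b)\cup\{z\in\mathbb{C}^-:\arg(z)\in(-\theta,0)\}
\]
is a conformal bijection onto $\mathbb{C}^-$. Once this is established, the inverse $G^{-1}:\mathbb{C}^-\to\Omega$ will furnish the desired univalent extension of the right inverse originally defined on the triangular domain $\Lambda_{\gamma,\delta}$, proving $\mu\in\mathrm{UI}$. By Lemma \ref{Has} and assumption (A6), $G$ is analytic on $\Omega$ and extends continuously to $\overline{\Omega}\setminus\{0\}$, so the strategy reduces to a standard boundary-value/argument-principle argument.

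The first task is to compute the boundary values of $G$ on each component of $\partial\Omega$ and show that, except at $0$ and $\infty$, the image lies in $\overline{\mathbb{C}^+}$. On $(-\infty,0)+i0$ the Cauchy transform is obviously real and negative. On the slit pieces $(0,a)\pm i0$ and $(b,\infty)\pm i0$, the relation $G(z)=\tilde G(z)-2\pi i f(z)$ from Lemma \ref{Has} together with (A2) (which forces $f(x-i0)$ to be purely imaginary there) makes $G$ real on both sides. On the ray $z=re^{-i\theta}$, direct expansion of $G(z)=\tilde G(z)-2\pi i f(z)$ gives
\[
\mathrm{Im}\,G(re^{-i\theta})=\mathrm{Im}\,\tilde G(re^{-i\theta})-2\pi\,\mathrm{Re}\,f(re^{-i\theta}),
\]
and both terms are nonnegative: the first because $\mathrm{supp}\,\mu\subset(a,b)\subset\mathbb{R}^+$ and $\sin\theta\ge0$, the second by (A4). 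Hence the image of this ray lies in $\overline{\mathbb{C}^+}$.

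The behaviour of $G$ at the endpoints of $\partial\Omega$ is governed by (A3) and (A5): as $z\to\infty$ in $\Omega$, $G(z)\sim 1/z\to0$, while (A3) combined with Lemma \ref{Has} gives $G(z)\sim -2\pi\alpha/z^{1+l}$ as $z\to 0$ in the wedge, so $G$ has a pole-like singularity there. The quantitative condition $0<l<(2\pi-\theta)/\theta$ is exactly what ensures that as $z$ traces a small arc around $0$ inside $\overline{\Omega}$ from the ray $\arg(z)=-\theta$ across to the ray $\arg(z)=\pi$, the argument of $G$ sweeps through an angle less than $2\pi$, so the image arc remains a large loop through $\infty$ lying in $\overline{\mathbb{C}^+}$. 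Piecing the local analyses together, $G(\partial\Omega)$ is a closed curve in $\overline{\mathbb{C}^+}\cup\{\infty\}$ that runs along $\mathbb{R}$ on the slit pieces, makes a large loop through $\infty$ as $z$ encircles $0$, and returns along the image of the ray at angle $-\theta$.

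Applying the argument principle to $G-w$ on $\Omega$ for each $w\in\mathbb{C}^-$ then shows that the number of preimages equals the winding number of $G(\partial\Omega)$ around $w$, which I would verify to be $1$; combined with the fact that $G$ already maps $\mathbb{C}^+$ into $\mathbb{C}^-$ (so at least one such preimage exists in $\Omega$), this yields that $G:\Omega\to\mathbb{C}^-$ is a bijection. Univalence of $G$ on $\Omega$ translates to univalence of $G^{-1}$ on $\mathbb{C}^-$, finishing the proof. The main obstacle I anticipate is Step~(c): carefully tracking the image of the ray at angle $-\theta$ together with the arc image near $0$ to certify that the total winding number around an arbitrary $w\in\mathbb{C}^-$ is exactly one. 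The hypothesis $l<(2\pi-\theta)/\theta$ in (A3) is the precise quantitative constraint that makes this winding count work, while (A4) and (A5) prevent the ray image and the far-field image from crossing into $\mathbb{C}^-$.
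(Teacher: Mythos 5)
Your overall route is the same as the paper's: continue $G$ to $\Omega=\mathbb{C}^+\cup(a,b)\cup\{z\in\mathbb{C}^-:\arg z\in(-\theta,0)\}$, check that the boundary values lie in $\mathbb{R}\cup\overline{\mathbb{C}^+}$ away from $0$ and $\infty$, and run the argument principle to show each $w\in\mathbb{C}^-$ has exactly one preimage. Your computations on the slits and on the ray $\arg z=-\theta$ are correct, as is your identification of $l<(2\pi-\theta)/\theta$ as the quantitative input. The genuine gap is exactly the step you defer (``which I would verify to be $1$''), and your description of the geometry near $z=0$ would make that verification fail. The boundary of $\Omega$ near $0$ does not contain an arc running from $\arg z=-\theta$ across to $\arg z=\pi$: the point $0+i0$ is an ordinary point of the boundary piece $(-\infty,a]+i0$, where $G=\tilde{G}$ stays bounded (its limit is $-\int t^{-1}d\mu(t)$, finite since $\mathrm{supp}\,\mu\subset(a,b)$ with $a>0$). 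The blow-up (A3) is seen only along the wedge portion of the boundary, i.e.\ on a small arc $\delta e^{\ii\psi}$ with $\psi\in[-\theta,0]$, reached by travelling along $(0,a]-\ii 0$ into $0$ and leaving along the ray at angle $-\theta$. On that arc $G(\delta e^{\ii\psi})\approx\frac{2\pi\alpha}{\delta^{1+l}}e^{\ii(-\pi-\psi(1+l))}$, so its image is a single large arc whose argument increases from $-\pi$ to $-\pi+\theta(1+l)$; the hypothesis $l<(2\pi-\theta)/\theta$ is what keeps the terminal argument below $+\pi$.

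Crucially, this large arc does \emph{not} lie in $\overline{\mathbb{C}^+}$. Combining (A3) with (A4) as $u\to0$ forces $\sin(\theta(1+l))\le 0$, hence $\theta(1+l)\in[\pi,2\pi)$, so the arc sweeps counterclockwise through every direction of the lower half-plane exactly once; it is the unique large-modulus piece of the image curve entering $\mathbb{C}^-$, and that is precisely what makes the winding number about each $w$ with $\epsilon<|w|<1/\epsilon$ equal to one. If, as you assert, the image of $\partial\Omega$ were a closed curve contained in $\overline{\mathbb{C}^+}\cup\{\infty\}$, its winding number about points of $\mathbb{C}^-$ would be zero and the argument principle would yield nothing. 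Two smaller repairs: the argument principle must be applied to a truncated, bounded domain (excise a $\delta$-disc at $0$ and restrict to $|z|<\eta$, as in the paper's curves $c_1,\dots,c_8$) so that the image is an honest closed curve in $\mathbb{C}$ --- a curve passing ``through $\infty$'' has no well-defined winding number --- and univalence of $G^{-1}$ is then obtained on the regions $\{w\in\mathbb{C}^-:\epsilon<|w|<1/\epsilon\}$ before letting $\epsilon\to0$ and invoking the identity theorem to match the original right inverse on $\Lambda_{\gamma,\delta}$. With these corrections your sketch becomes the paper's proof.
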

\begin{proof}
We define the following 8 lines and curves (see Figure \ref{ck}). In the following $\eta>0$ is supposed to be large and $\delta>0$ is supposed to be small.
\begin{itemize}
\item $c_1$ is the real line segment from $-\eta +i0$ to $a+i0$;
\item $c_2$ is the real line segment from $a-i0$ to $\delta-i0$;
\item $c_3$ is the clockwise circle $\delta e^{i\psi}$ where $\psi$ starts from $0$ and ends with $-\theta$;
\item $c_4$ is the line segment from $\delta e^{-i\theta}+0$ to $\eta e^{-i \theta}+0$;
\item  $c_5$ is the counterclockwise circle $\eta e^{i\psi}$ where $\psi$ starts from $-\theta$ and ends with $0$;
\item $c_6$ is the real line segment from $\eta-i0$ to $b-i0$;
\item $c_7$ is the real line segment form $b+i0$ to $\eta+i0$;
\item $c_8$ is the counterclockwise circle $\eta e^{i\psi}$ where $\psi$ starts from $0$ and ends with $\pi$. 
\end{itemize}

\begin{figure}[htbp]
\begin{center}
  \begin{tabular}{c}

 \begin{minipage}{0.45\hsize}
      \centering 
        \includegraphics[clip, width=9.5cm]{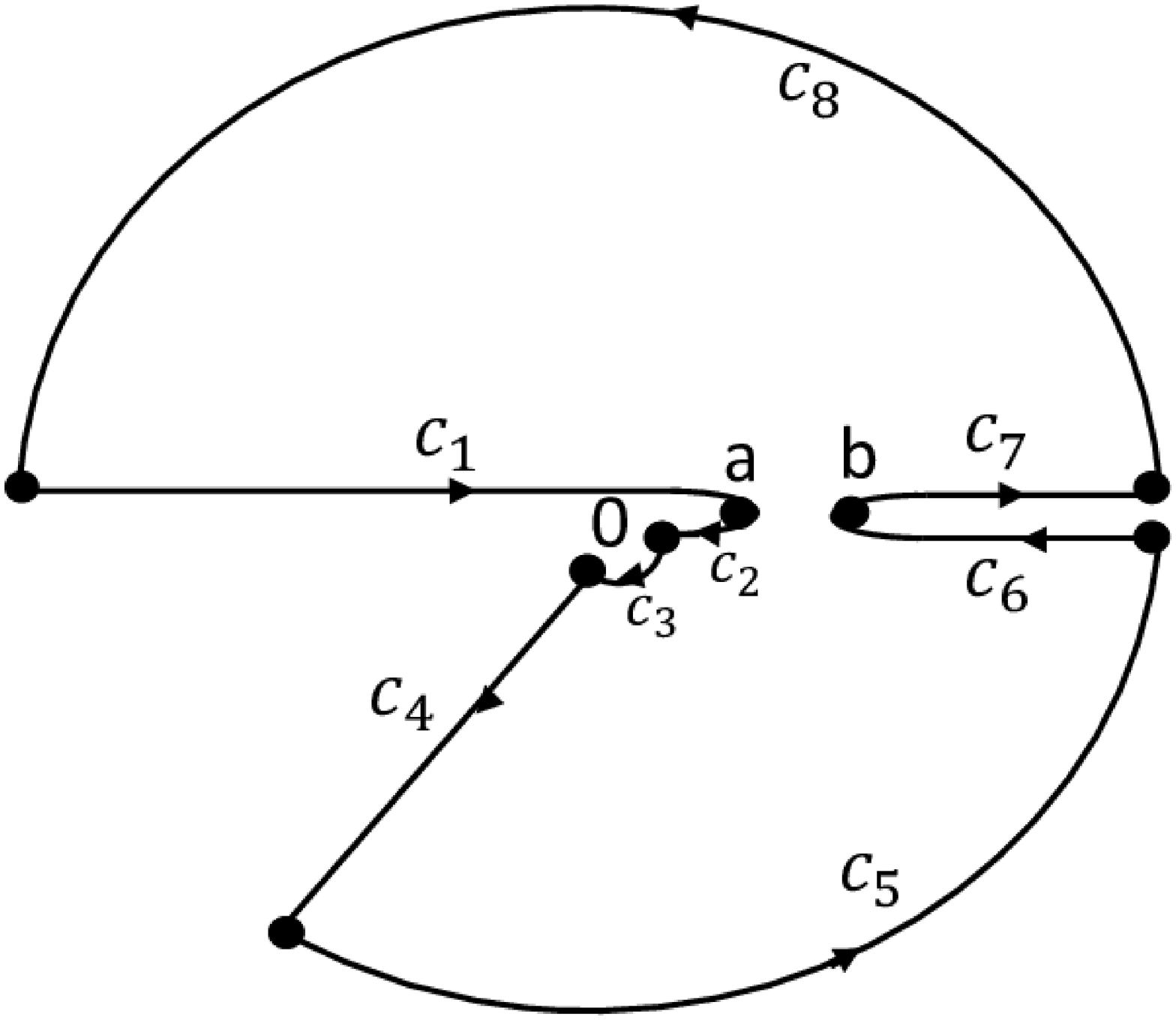}
	\caption{The curves $c_k$}\label{ck}
    \end{minipage}

    \begin{minipage}{0.5\hsize}
      \centering 
        \includegraphics[clip, width=9.5cm]{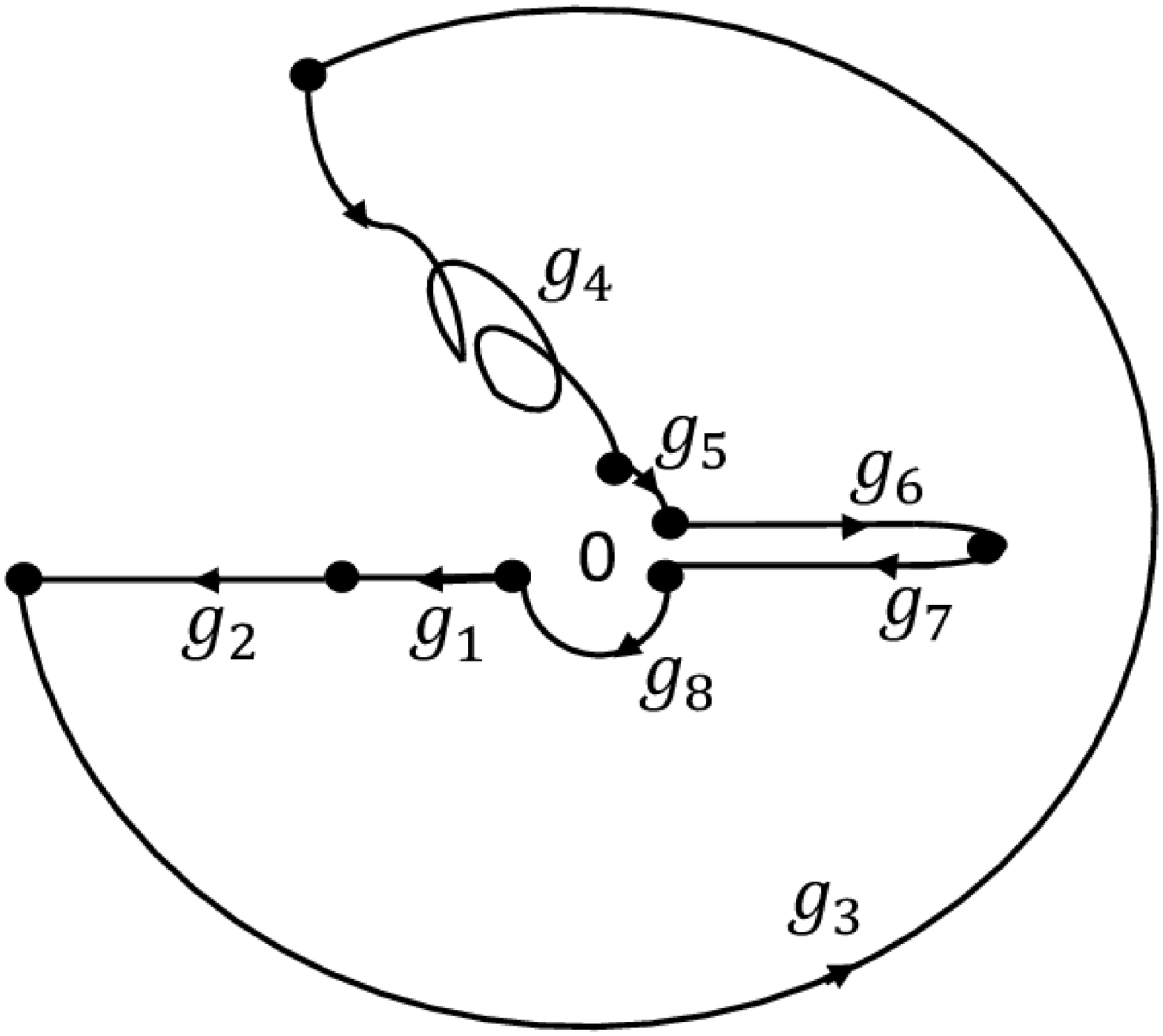}
	\caption{The curves $g_k$}\label{gk}
    \end{minipage}

  \end{tabular}
  \label{fig:img}
  \end{center}
\end{figure}

Define the curves $g_k:=G(c_k)$ for all $k=1,\dots ,8$ (see Figure \ref{gk}). It is easy to prove that $g_1$ is the negative real line, $g_7$ is the positive real line and $g_8$ is the small clockwise circle whose argument starts from 0 and ends with $-\pi$. By our assumption (A6), the curves $g_1\cup g_2$ and $g_6\cup g_7$ are continuous since so are $c_1\cup c_2$ and $c_6\cup c_7$. By our assumption (A2), we have that Re$(f(x-i0))=0$ for all $0<x<a$ and $x>b$. Hence Im$(G(x-i0))=0$ for all $0<x<a,$ $x>b$, and therefore $g_2$ and $g_6$ are real lines. By our assumption (A4), we have 
\begin{equation}
\text{Im}(G(ue^{-i\theta}))=\text{Im}(\tilde{G}(ue^{-i\theta}))-2\pi \text{Re}(f(ue^{-i\theta}))>0, \qquad u>0.
\end{equation}
Therefore $g_4$ is contained in $\mathbb{C}^+$. Let $\epsilon>0$ be supposed to be small. Since our assumption (A3) and the condition \eqref{Cauchy} hold and $\tilde{G}(z)=o(1/z)$ as $z\rightarrow 0$, $z\in\mathbb{C}^-$, we have that
\begin{equation}\label{asympt}
G(z)=-\frac{2\pi \alpha}{z^{1+l}}(1+o(1)), \qquad \text{as } z\rightarrow 0, \hspace{2mm} \arg(z)\in (-\theta,0).
\end{equation}
By asymptotics \eqref{asympt}, we can take $\delta\in(0, (\pi\alpha\epsilon)^{\frac{1}{1+l}})$ small enough such that
\begin{equation}
|G(\delta e^{i \psi})+2\pi \alpha (\delta e^{i\psi})^{-(1+l)}|< \pi\alpha\delta^{-(1+l)},
\end{equation}
uniformly on $\psi\in(-\theta,0)$. Hence we have
\begin{equation}
|G(\delta e^{i\psi})|>\pi\alpha\delta^{-(1+l)}>\frac{1}{\epsilon},
\end{equation}
and therefore the distance between the curve $g_3$ and $0$ is larger than $\frac{1}{\epsilon}$.  Since we have that $G(\delta e^{i\psi})=\frac{2\pi\alpha}{\delta^{1+l}}e^{(-\pi-\psi(1+l))i}$, and the argument $\psi$ starts from $0$ and ends with $-\theta$, the $g_3$ is the large counterclockwise circle whose argument starts from $-\pi$ and ends with $-\pi+\theta(1+l)$ ($<\pi$ by (A3)). 

Note that $\tilde{G}(z)=\frac{1}{z}(1+o(1))$ as $|z|\rightarrow \infty$, $z\in\mathbb{C}^-$. By our assumption (A5), we have that $|f(z)|\rightarrow 0$ as $|z|\rightarrow \infty$, $\arg(z)\in(-\theta,0)$, and therefore
\begin{equation}
|G(z)|\le |\tilde{G}(z)|+2\pi|f(z)|\rightarrow 0, 
\end{equation}
as $|z|\rightarrow\infty$, $\arg(z)\in(-\theta,0)$. Hence there exists $\eta>0$ large enough such that $|z|\ge \eta$ implies that $|G(z)|<\epsilon$, that is, $g_5$ is contained in a small circle with radius $\epsilon>0$. 

Therefore every point of $D_{\epsilon}:=\{ z\in\mathbb{C}^-: \epsilon< |z| < \frac{1}{\epsilon}\}$ is surrounded by the closed curve $g_1\cup \dots \cup g_8$ exactly once. By the argument principle, for any $w\in D_{\epsilon}$, there exists only one element $z$ in the bounded domain surrounded by the closed curve $c_1\cup \dots \cup c_8$, such that $G(z)=w$. Therefore we can define a right inverse function $G^{-1}$ on $D_{\epsilon}$. Since $G^{-1}(D_{\epsilon})$ is a connected domain and $G$ is univalent on the domain $G^{-1}(D_{\epsilon})$, the inverse function $G^{-1}$ is univalent on $D_{\epsilon}$. We can define a univalent right inverse function (as the same symbol $G^{-1}$) in $\mathbb{C}^{-}$  by letting $\epsilon \rightarrow 0$. By the identity theorem, the right inverse function, originally defined in some triangular domain $\Lambda_{\gamma,\delta}$ has a univalent analytic continuation to our function $G^{-1}$ on $\mathbb{C}^-$. Hence we can conclude that $\mu\in$ UI. 
\end{proof}


\section{Free infinite divisibility for generalized power distributions with free Poisson term}
In \cite{Has16}, if a random variable $X$ follows the free Poisson distribution (Marchenko-Pastur distribution) ${\bf fp}(p)$, then $X^r$ follows an FID (UI) distribution, where $p>0$ and $r\in\mathbb{R}$ have some properties. We prove that if $X$ follows the generalized power distribution with free Poisson term (depend on five parameters $a,b, N,\alpha,l$), then $X^r$ follows an FID (UI) distribution when $l\in [t,t+1]_<^N$ for some $t\ge0$ and $r\ge 1$ (in particular $l\in[0,1]_<^N$, $|r|\ge 1$). The class of these distributions is the generalization of the free Poisson distributions and the free Generalized Inverse Gaussian (fGIG) distributions (See Example \ref{ex}).

\begin{theorem}\label{Thm}
Consider $0<a<b$. Suppose that a random variable $X$ follows the {\it generalized power distributions with free Poisson term} (for short, {\it GPFP distributions}), that is, 
\begin{equation}\label{GPFP}
\begin{split}
X\sim \frac{\sqrt{(b-x)(x-a)}}{x}\sum_{k=1}^N \frac{\alpha_k}{x^{l_k}} 1_{(a,b)}(x)dx&=:f_{a,b,N,\alpha,l}(x)dx\\
&=:\text{GPFP}(a,\,b,\,N,\,\alpha,\,l),
\end{split}
\end{equation}
where $N\in\mathbb{N}$, $\alpha=(\alpha_1,\dots ,\alpha_N)\in(0,\infty)^N$ with $\int_a^b f_{a,b,N,\alpha,l}(x)dx=1$ and $l=(l_1,\dots ,l_N)\in\mathbb{R}_{<}^N$. If $l\in [t,t+1]_<^N$ for some $t\ge0$, then $X^r\sim$ UI if $r\ge 1$.
\end{theorem}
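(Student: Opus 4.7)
The plan is to apply Theorem~\ref{lemma} to the law of $Y:=X^r$. By change of variables, the density of $Y$ on $(a^r,b^r)$ is
\begin{equation*}
g(y)\;=\;\frac{1}{r}\,y^{1/r-1}\,f_{a,b,N,\alpha,l}(y^{1/r})\;=\;\frac{\sqrt{(b-y^{1/r})(y^{1/r}-a)}}{r\,y}\sum_{k=1}^N\alpha_k\,y^{-l_k/r}.
\end{equation*}
Using the principal branches of $y^{1/r}$ and $y^{-l_k/r}$ on $\mathbb{C}\setminus(-\infty,0]$ together with the branch of the square root that agrees with the positive value on $(a^r,b^r)$, the function $g$ extends analytically to $\{z\in\mathbb{C}\setminus\{0\}:\arg(z)\in(-\theta,0)\}\cup(a^r,b^r)$ for every $\theta\in(0,\pi]$, giving (A1). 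For (A2), when $x\in(0,a^r)\cup(b^r,\infty)$ the number $(b-x^{1/r})(x^{1/r}-a)$ is real and negative, so its analytically continued square root from $(a^r,b^r)$ is purely imaginary on $x-i0$ and hence $\mathrm{Re}\,g(x-i0)=0$.

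The choice of $\theta$ is dictated by (A3). The hypothesis $l\in[t,t+1]_<^N$ forces $l_N\le t+1$, so one picks any $\theta\in(0,\pi]$ with $\theta(1+l_N/r)<2\pi$, i.e.\ $\theta<2\pi r/(r+l_N)$. Tracking $(b-y^{1/r})(y^{1/r}-a)$ as $y\to 0$ inside the sector shows that it approaches $-ab$ from the third quadrant, so the continued square root tends to $-i\sqrt{ab}$, and the dominant term of $g$ near $0$ is $-i\sqrt{ab}\,\alpha_N/r\cdot y^{-1-l_N/r}$; this verifies (A3) with $\alpha=\sqrt{ab}\,\alpha_N/r>0$ and $l=l_N/r$. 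For (A5), the asymptotic $g(y)\sim\pm i\,\alpha_1/r\cdot y^{-1+(1-l_1)/r}$ as $|y|\to\infty$ in the sector tends to $0$ whenever $l_1>1-r$; this is guaranteed by $l_1\ge t\ge 0$ and $r\ge 1$, except in the degenerate case $r=1$, $l_1=0$, which one handles either by identifying the $N=1$ subcase with the free Poisson distribution (in UI by Example~\ref{UIex}) or, for general $N\ge 2$, by a perturbation $l_1\to 0^+$ combined with Lemma~\ref{UI} (closedness of UI under weak convergence).

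The main obstacle is (A4): $\mathrm{Re}\,g(ue^{-i\theta})\le 0$ for all $u>0$. Setting $w:=u^{1/r}e^{-i\theta/r}$,
\begin{equation*}
g(ue^{-i\theta})\;=\;\frac{e^{i\theta}\sqrt{(b-w)(w-a)}}{r\,u}\cdot e^{il_1\theta/r}\sum_{k=1}^N\alpha_k\,u^{-l_k/r}\,e^{i(l_k-l_1)\theta/r}.
\end{equation*}
Since $l_k-l_1\in[0,1]$, each summand has positive magnitude and argument in $[0,\theta/r]$, so the inner sum itself has argument in $[0,\theta/r]$. The remaining task is to analyze $\arg\sqrt{(b-w)(w-a)}$ via a case split on whether $w$ lies below $[a,b]$, to the left of $a$, or to the right of $b$, and then combine the contributions coming from $\theta$, $\arg\sqrt{(b-w)(w-a)}$ and the argument of the sum to ensure that $g(ue^{-i\theta})$ lies in the closed left half-plane. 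The constraint $l\in[t,t+1]_<^N$ is used precisely here, bounding the spread of arguments in the sum by $\theta/r$; together with the choice of $\theta$, this keeps $\mathrm{Re}\,g(ue^{-i\theta})$ non-positive uniformly in $u$. Once (A4) is established, Theorem~\ref{lemma} yields $X^r\in\mathrm{UI}$.
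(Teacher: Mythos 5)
Your skeleton is the same as the paper's: pass to the density of $X^r$ by a change of variables, and check the hypotheses (A1)--(A6) of Theorem~\ref{lemma}. Your verifications of (A1), (A2), (A3) and (A5) (including the identification $\alpha=\sqrt{ab}\,\alpha_N/r$, $l=l_N/r$ near $0$ and the treatment of the degenerate case $r=1$, $l_1=0$ by weak limits) agree in substance with the paper, and your idea of factoring $e^{il_1\theta/r}$ out of the sum so that the hypothesis $l_N-l_1\le 1$ bounds the spread of arguments of the remaining sum by $\theta/r$ is a clean way to see where the hypothesis $l\in[t,t+1]_<^N$ enters.

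The gap is that (A4) --- the only genuinely delicate step --- is announced but not proved, and the one parameter on which it hinges is never fixed. You choose $\theta$ only subject to the (A3) constraint $\theta(1+l_N/r)<2\pi$ and then assert that the three argument contributions ``combine'' to land in the closed left half-plane; but for a generic such $\theta$ this is false. Concretely, for $t=0$ and $\theta$ small, $\arg g(ue^{-i\theta})$ ranges over an interval of length $\pi$ centered near $\theta(1+l_1/r)\approx 0$, so $\operatorname{Re}g(ue^{-i\theta})>0$ for many $u$. The whole point is that $\theta$ must be calibrated so that the argument window, which by your own bookkeeping is the length-$\pi$ interval $\bigl(\theta(1+l_1/r)-\tfrac{\pi}{2},\ \theta(1+l_1/r)+\tfrac{\pi}{2}\bigr)$ (using $\arg\sqrt{(b-w)(w-a)}\in(-\tfrac{\pi}{2},\tfrac{\pi}{2}-\tfrac{\theta}{r})$ along the ray $w=u^{1/r}e^{-i\theta/r}$, which itself needs proof), sits exactly inside $(\tfrac{\pi}{2},\tfrac{3\pi}{2})$; this forces $\theta(1+l_1/r)=\pi$, i.e.\ $\theta=\pi r/(r+l_1)$, and one must then recheck that this $\theta$ is $\le\pi$ and still satisfies the (A3) constraint (which uses $l_N-l_1\le 1\le r$ again). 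The paper does the analogous calibration with $\theta=\pi/(1+[t]/r)$ and verifies term by term that $\arg h_k(ue^{-i\theta})\in(\tfrac{\pi}{2},\tfrac{3\pi}{2})$. Without exhibiting this choice of $\theta$ and carrying out the resulting inequality, the proposal stops exactly at the step that constitutes the proof. A smaller omission: (A6) is not addressed at all (the paper imports it from \cite[Theorem 5.1]{Has14}).
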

\begin{proof}
Fix a number $N\in\mathbb{N}$. Consider first that $r>1$ and $l=(l_1,\dots, l_N)\in[t,t+1]_<^N$ satisfies $t<l_1<\dots <l_N<t+1$ for some $t\ge0$. Assume that $[t]=n-1$ for some $n\in\mathbb{N}$. Define $\tilde{l}_k:=l_k-[t]=l_k-(n-1)\in(0,1)$ for all $k=1,\dots ,N$. Then we have
\begin{equation}
X^r\sim \frac{s\sqrt{(B^s-x^s)(x^s-A^s)}}{x} \sum_{k=1}^N \frac{\alpha_k}{x^{s(n-1+\tilde{l}_k)}}\cdot 1_{(A,B)}(x)dx=:h(x)dx,
\end{equation}
where $s=\frac{1}{r}\in(0,1)$, $A=a^r$ and $B=b^r$. We define a function $h_k(x)$ as the $k$-th term of $h(x)$. Set $n_0:=s(n-1)+1\ge1$. Note that $s(n-1+\tilde{l}_k)=n_0-1+\tilde{l}_ks$ for each $k=1,\dots ,N$. Consider $\theta=\frac{\pi}{n_0}$. Let $G=G_{X^r}$ be the Cauchy transform of $X^r$. We prove that $\theta$, $h$ and $G$ satisfy the assumptions from (A1) to (A6) in Theorem \ref{lemma}. Suppose that $z\in\mathbb{C}\setminus \{0\}$ and $\arg(z)\in(-\theta,0)$. By a similar proof of \cite[Theorem 3.5]{Has16}, we have that $\arg((B^s-z^s)(z^s-A^s))\in(-\pi,\pi)$. Moreover we also have that $\arg(z^{s(n-1+\tilde{l}_k)})\in (-\theta s(n-1+\tilde{l}_k),0)\subset (-\pi,0)$. Thus every $h_k$ analytically extends to a function defined in $\{z\in\mathbb{C}\setminus\{0\}: \arg(z)\in(-\theta,0)\}\cup (A,B)$, so that the assumption (A1) holds. For $0<x<A$ we have
\begin{equation}
h_k(x-i0)=-\frac{s\alpha_k\sqrt{(A^s-x^s)(B^s-x^s)}}{x^{1+s(n-1+\tilde{l}_k)}} i.
\end{equation}
For $x>B$ we have
\begin{equation}
h_k(x-i0)=\frac{s\alpha_k\sqrt{(x^s-A^s)(x^s-B^s)}}{x^{1+s(n-1+\tilde{l}_k)}} i.
\end{equation}
Therefore Re$(h_k(x-i0))=0$ for all $0<x<A$ and $x>B$, so that the assumption (A2) holds. For each $k=1,\cdots, N$, we have
\begin{equation}
h_k(z)=-\frac{is\alpha_k \sqrt{(AB)^s}}{z^{1+s(n-1+\tilde{l}_k)}}(1+o(1)), \qquad \text{as } z\rightarrow0, \arg(z)\in\left(-\theta,0\right). 
\end{equation}
Moreover we have that $0<s(n-1+\tilde{l}_k)<\frac{2\pi-\theta}{\theta}$. Therefore the assumption (A3) holds. Consider first $0<\theta<\pi$ (equivalently, $n>1$). For all $k=1,\dots ,N$, $u>0$ we have
\begin{equation}
\begin{split}
\arg(h_k(ue^{-i\theta}))&=\arg \left( \frac{s\alpha_k \sqrt{(B^s-u^se^{-is\theta})(u^se^{-is\theta}-A^s)}}{u^{1+s(n-1+\tilde{l}_k)}e^{-i\theta(1+s(n-1+\tilde{l}_k))}} \right)\\
&\in \left(-\frac{\pi}{2}+\theta(1+s(n-1+\tilde{l}_k)),\frac{\pi-2s\theta}{2}+\theta(1+s(n-1+\tilde{l}_k))\right)\\
&= \left(-\frac{\pi}{2}+\theta(n_0+s\tilde{l}_k),\frac{\pi-2s\theta}{2}+\theta(n_0+s\tilde{l}_k)\right)\\
&=\left(\frac{\pi}{2}+\theta s\tilde{l}_k, \frac{3}{2}\pi-\theta s(1-\tilde{l}_k)\right)\\
& \subset \left(\frac{\pi}{2},\frac{3}{2}\pi \right),
\end{split}
\end{equation}
since we have
\begin{equation}
\arg\left(\sqrt{(B^s-u^se^{-is\theta})(u^se^{-is\theta}-A^s)}\right) \in \left(-\frac{\pi}{2}, \frac{\pi-2s\theta}{2}\right).
\end{equation}
Therefore Re$(h_k(ue^{-i\theta}))\le0$ for all $u>0$. When $\theta=\pi$ (equivalently, $n=1$), for all $k=1,\dots ,N$, $u<0$ we have that
\begin{equation}
\begin{split}
\arg(h_k(u))&=\arg\left(\frac{s\alpha_k \sqrt{(B^s-(u-i0)^s)((u-i0)^s-A^s)}  }{u^{1+s(n-1+\tilde{l}_k)}}\right)\\
&=\arg\left(\frac{s\alpha_k \sqrt{(B^s-(u-i0)^s)((u-i0)^s-A^s)}  }{|u|^{1+s\tilde{l}_k}e^{-i\pi(1+s\tilde{l}_k)}}\right)\\
& \in \left(-\frac{\pi}{2}+\pi(1+s\tilde{l}_k), \frac{1-2s}{2}\pi+ \pi(1+s\tilde{l}_k) \right)\\
& = \left(\frac{\pi}{2}+\pi s\tilde{l}_k, \frac{3}{2}\pi- \pi s(1-\tilde{l}_k) \right)\\
&\subset \left(\frac{\pi}{2},\frac{3}{2}\pi\right),
\end{split}
\end{equation}
since
\begin{equation}
\arg\left( \sqrt{(B^s-(u-i0)^s)((u-i0)^s-A^s)} \right)\in \left(-\frac{\pi}{2}, \frac{1-2s}{2}\pi\right).
\end{equation}
Therefore Re$(h_k(u))\le 0$ for all $u<0$. Hence we can conclude that the assumption (A4) holds. Finally, we have
\begin{equation}
h_k(z)\sim \frac{-is\alpha_k}{z^{1+s(n-2+\tilde{l}_k)}}, \text{ as } |z|\rightarrow \infty, \arg(z)\in \left(-\theta,0\right).
\end{equation}
Hence $\lim_{|z|\rightarrow\infty,\arg(z)\in(-\theta,0)}h_k(z)=0$, so that the assumption (A5) holds. Finally, we can conclude that the function $h$ satisfies from (A1) to (A5) since so is every function $f_k$, $(k=1,\dots ,N)$. By taking $\alpha=\frac{3}{2}$, $x_0=A$ and $x_0=B$ in \cite[Theorem 5.1 (5.6)]{Has14}, the Cauchy transform $G$ satisfies the assumption (A6). By Theorem \ref{lemma}, we have $X^r\sim$ UI.

By the w-closedness of UI (see Lemma \ref{UI}), we have that $X^r\sim$ UI even if $r\ge 1$ and $l\in[t,t+1]_<^N$ satisfies that $t\le l_1<\dots <l_N\le t+1$ for $t\ge0$.
\end{proof}

\begin{corollary}\label{Cor}
If $X\sim \text{GPFP}(a,b,N,\alpha,l)$ and $l\in [0,1]_<^N$ then $X^r\sim$ UI when $r\le -1$.
\end{corollary}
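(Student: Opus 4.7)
The plan is to reduce to Theorem~\ref{Thm} by passing to the reciprocal. The key observation is that if $r \le -1$ then $-r \ge 1$, so if $Y := X^{-1}$ can be identified as another GPFP random variable with parameter vector in $[0,1]_<^N$, then applying Theorem~\ref{Thm} with $t = 0$ to $Y$ and exponent $s := -r \ge 1$ yields $X^r = Y^{-r} = Y^s \sim$ UI.

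To carry this out, one first computes the density of $Y = 1/X$. A routine change of variables $y = 1/x$ with Jacobian $1/y^2$, together with the substitutions $A := 1/b$, $B := 1/a$, transforms the GPFP density of $X$ on $(a,b)$ into
\begin{equation*}
g(y) = \frac{\sqrt{(B-y)(y-A)}}{y} \sum_{k=1}^N \frac{\sqrt{ab}\,\alpha_k}{y^{1-l_k}}, \qquad y \in (A,B),
\end{equation*}
where the factor $\sqrt{ab}$ arises from $(b-1/y)(1/y-a) = ab(y-A)(B-y)/y^2$. The exponents $1-l_k$ appear in the opposite order to the $l_k$, so after relabeling $k \mapsto N+1-k$ one obtains $Y \sim \text{GPFP}(A,B,N,\beta,m)$ with $\beta_k := \sqrt{ab}\,\alpha_{N+1-k}$ and $m_k := 1 - l_{N+1-k}$. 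The hypothesis $0 \le l_1 < \cdots < l_N \le 1$ translates directly to $0 \le m_1 < \cdots < m_N \le 1$, so $m \in [0,1]_<^N$.

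Theorem~\ref{Thm} applied to $Y$ with $t = 0$ then gives $Y^s \sim$ UI for every $s \ge 1$, and taking $s := -r \ge 1$ concludes the argument. The only non-trivial step is the bookkeeping in the change of variables; once the density of $1/X$ is rearranged into GPFP form with exponents lying in $[0,1]$, the corollary follows immediately from Theorem~\ref{Thm}.
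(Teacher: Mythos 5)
Your proof is correct, but it takes a different route from the paper's. The paper proves the corollary by writing down the density of $X^r$ for $r\le -1$ in one step (it is $p(x)=\frac{t\sqrt{(AB)^{-t}}\sqrt{(B^t-x^t)(x^t-A^t)}}{x}\sum_k \alpha_k x^{-t(1-l_k)}$ with $t=-1/r$, $A=b^{-1/t}$, $B=a^{-1/t}$) and then re-running the verification of hypotheses (A1)--(A6) of Theorem~\ref{lemma} directly for this density with $\theta=\pi$, i.e.\ it repeats the complex-analytic argument of Theorem~\ref{Thm} rather than invoking that theorem. You instead factor $x\mapsto x^r$ as $x\mapsto x^{-1}\mapsto (x^{-1})^{-r}$, observe that the reciprocal of a GPFP law is again a GPFP law with exponent vector $(1-l_N,\dots,1-l_1)$ and weights $(\sqrt{ab}\,\alpha_N,\dots,\sqrt{ab}\,\alpha_1)$ (this is exactly the content of the paper's Remark~\ref{remark}, which the authors only state \emph{after} the corollary), note that $l\in[0,1]_<^N$ is precisely the condition under which the new exponent vector again lies in $[0,1]_<^N$, and then apply Theorem~\ref{Thm} with $t=0$ as a black box. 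Your bookkeeping is right: $(b-1/y)(1/y-a)=ab(y-A)(B-y)/y^2$ with $A=1/b$, $B=1/a$, the Jacobian $1/y^2$ produces the exponents $1-l_k$, and the reversal of order is handled by relabeling. What your approach buys is economy --- no second pass through (A1)--(A6) is needed --- at the cost of using the stability of the GPFP class under reciprocals, which the paper's direct verification does not require; conversely, the paper's proof is self-contained at the level of Theorem~\ref{lemma} but duplicates work that Theorem~\ref{Thm} already did.
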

\begin{proof}
Suppose that $r\le -1$. We have
\begin{equation}
X^r \sim \frac{t\sqrt{(AB)^{-t}}\sqrt{(B^t-x^t)(x^t-A^t)}}{x} \sum_{k=1}^N \frac{\alpha_k}{x^{t(1-l_k)}}\cdot 1_{(A,B)}(x)dx:=p(x)dx,
\end{equation}
where $t:=-\frac{1}{r}\in(0,1)$, $A:=b^{-1/t}$ and $B:=a^{-1/t}$. We set $\theta=\pi$ and a function $p_k(x)$ as the $k$-th term of $p(x)$. By a similar proof of Theorem \ref{Thm}, we have that $\theta$, $p_k$, $p$, and $G=G_{X^r}$ satisfy the assumptions from (A1) to (A6) in Theorem \ref{lemma}.  Therefore $X^r\sim$ UI if $r\le -1$. 
\end{proof}

Finally, we have that $X\sim \text{GPFP}(a,b,N,\alpha,l)$ and $l\in[0,1]_<^N$ implies that $X^r\sim$ UI if $|r|\ge 1$ by Theorem \ref{Thm} and Corollary \ref{Cor}, (but we will find a counterexample in the case when $X\sim \text{GPFP}(a,b,N,\alpha,l)$ and $l\in[t,t+1]_<^N$ for some $t>0$ by calculating free cumulants and Hankel determinants in section 5). 

\begin{remark}\label{remark}
Consider a random variable $X$ which satisfies that $X\sim \text{GPFP}(a,\, b,\, N,\, \alpha,\, l)$ where $\alpha=(\alpha_1,\dots , \alpha_N)$ and $l=(l_1, \dots , l_N)$. By a similar calculation in Corollary \ref{Cor}, we have that
\begin{equation}
X^{-1}\sim \text{GPFP}\left( \frac{1}{b},\, \frac{1}{a},\, N, \, (\alpha_N\sqrt{ab},\dots , \alpha_1\sqrt{ab}), \, (1-l_N,\, \dots ,1-l_1) \right).
\end{equation}
\end{remark}

We give five examples of GPFP distributions.

\begin{example}\label{ex}
(1) We put $p>1$, $a=(\sqrt{p}-1)^2$ and $b=(\sqrt{p}+1)^2$, $N=1$, $\alpha=\frac{1}{2\pi}$, $l=0$. Then we have that
\begin{equation}
\begin{split}
\text{GPFP}&\left(a=(\sqrt{p}-1)^2,\, b=(\sqrt{p}+1)^2,\, N=1,\, \alpha=\frac{1}{2\pi},\, l=0\right)\\
&=\frac{\sqrt{\left((\sqrt{p}+1)^2-x\right)\left(x-(\sqrt{p}-1)^2\right)}}{2\pi x}\cdot 1_{((\sqrt{p}-1)^2,(\sqrt{p}+1)^2)}(x)dx\\
&={\bf fp}(p).
\end{split}
\end{equation}
In this case, the GPFP distribution corresponds to the free Poisson distribution (Marchenko-Pastur distribution). By Theorem \ref{Thm} and Corollary \ref{Cor}, we have that $X\sim {\bf fp}(p)$ implies that $X^r\sim$ UI if $|r|\ge1$, (but the result in \cite{Has16} is stronger than this one).\\\\
(2) Consider $n\in\mathbb{N}$, $b>0$. Define the following constant number:
\begin{equation}
c(n,b):=\left(\int_{\frac{1}{n}}^b \frac{\sqrt{(x-1/n)(b-x)}}{2\pi x}dx\right)^{-1}.
\end{equation}
Note that $\lim_{n\rightarrow\infty}c(n,b)=\frac{4}{b}$ for each $b>0$. By Theorem \ref{Thm} and Corollary \ref{Cor}, for all $n\in\mathbb{N}$ we have that
\begin{equation}
\begin{split}
S_{n,b}\sim\text{GPFP}&\left(\frac{1}{b},\, n,\, N=1,\, \alpha=\frac{c(n,b)}{2\pi }\sqrt{\frac{b}{n}},\, l=1\right)\\
&=c(n,b)\sqrt{\frac{b}{n}}\cdot\frac{\sqrt{(x-1/b)\left(n-x\right)}}{2\pi x^2}\cdot1_{(1/b,n)}(x)dx,
\end{split}
\end{equation}
implies that $S_{n,b}^r\sim$ UI for all $|r|\ge 1$. Let $S_b$ be a random variable such that
\begin{equation}\label{freestable}
S_b\sim \frac{4}{b}\cdot\frac{\sqrt{bx-1}}{2\pi x^2} \cdot1_{(1/b,\infty)}(x)dx.
\end{equation}
In particular, if $b=4$, then the probability measure in \eqref{freestable} corresponds to the free positive stable law ${\bf fs}_{1/2}$ with index $1/2$. Therefore the w-closedness of UI implies that $S_b^r\sim$ UI for all $r\ge1$.  By a similar proof, we have that $S_b^r\sim$ UI for all $r\le -1$. In particular, we have that $S_4^{-1}\sim {\bf fp}(1)$.\\\\
(3) Consider $\lambda\in\mathbb{R}$. We put $N=2$, $\alpha=\left(\frac{\alpha_1}{2\pi}, \frac{\alpha_2}{2\pi\sqrt{ab}}\right)$ and $l=(0,1)$, where $0<a<b$. are the solution of
\begin{equation}
\begin{cases}
1-\lambda+\alpha_1\sqrt{ab}-\alpha_2\frac{a+b}{2ab}=0 \\
1+\lambda+\frac{\alpha_2}{\sqrt{ab}}-\alpha_1\frac{a+b}{2}=0.
\end{cases}
\end{equation}
Then we have
\begin{equation}
\begin{split}
\text{GPFP}&\left(a,\, b,\, N=2,\, \alpha=\left(\frac{\alpha_1}{2\pi},\frac{\alpha_2}{2\pi\sqrt{ab}}\right),\, l=(0,1)\right)\\
&=\frac{\sqrt{(x-a)(b-x)}}{2\pi x}\left(\alpha_1+\frac{\alpha_2}{\sqrt{ab}x} \right)\cdot 1_{(a,b)}(x)dx=:\text{fGIG}(\alpha_1,\alpha_2,\lambda).
\end{split}
\end{equation}
In this case, the GPFP distribution corresponds to the free Generalized Inverse Gaussian (fGIG) distribution. This distribution was studied by \cite{HS17} (free version of the generalized inverse gaussian distributions, free selfdecomposability, free regularity and unimodality) and \cite{S17} (free version of Matsumoto-Yor property and R-transform of fGIG distribution). Moreover $X\sim \text{fGIG}(\alpha_1,\alpha_2,\lambda)$ implies that $X^r \sim$ UI if $|r|\ge 1$ by Theorem \ref{Thm} and Corollary \ref{Cor}.\\\\
(4) Let $0<a<b$. We put $N=1$ and $l=-1$. Then we have 
\begin{equation}\label{semicircle}
\begin{split}
\text{GPFP}\left(a,\, b,\, N=1, \, \alpha=\frac{C}{2\pi}, \, l=-1\right)=\frac{C}{2\pi}\sqrt{\left(\frac{b-a}{2}\right)^2-\left(x-\frac{a+b}{2}\right)^2}\cdot 1_{(a,b)}(x)dx,
\end{split}
\end{equation}
where $C>0$ is a constant such that the function \eqref{semicircle} becomes a pdf. If a random variable $X$ follows the probability measure \eqref{semicircle}, then Remark \ref{remark} yields that
\begin{equation}
X^{-1}\sim \text{GPFP}\left(\frac{1}{b},\, \frac{1}{a},\,  1,\, \frac{C}{2\pi}\sqrt{ab}, \, 2 \right).
\end{equation}
By Theorem \ref{Thm}, we have that $X^{-r}=(X^{-1})^{r}\sim$ UI for $r\ge1$. Suppose that $S\sim S(0,1)$ and $u>2$. Then we have that
\begin{equation}
S+u\sim\text{GPFP}\left(a=u-2,\, b=u+2,\, N=1,\, \alpha=\frac{1}{2\pi},\, l=-1 \right).
\end{equation}
For the above reason, we have that $(S+u)^s\sim$ UI for all $s\le -1$. By the w-closedness of UI, we can also conclude that $(S+2)^r\sim$ UI for all $r\le -1$. However, since $(S+u)^2$ does not follow an FID distribution for $u\neq 0$ from the paper \cite{E12}, we can conclude that if $X\sim \text{GPFP}(a=u-2,\,b=u+2,\,N=1,\, \alpha=\frac{1}{2\pi} ,\ l=-1)$ then $X^2$ does not an FID distribution for all $u>2$.\\\\
(5) Consider $n\in\mathbb{N}$ and $l<1/2$. Define the following constant number:
\begin{equation}
\alpha(n,l):=\left(\frac{1}{B\left(\frac{1}{2}-l, \frac{3}{2}\right)}\int_{\frac{1}{n}}^1 \frac{\sqrt{(1-x)(x-1/n)}}{x^{1+l}}dx \right)^{-1}.
\end{equation}
Note that $\lim_{n\rightarrow\infty}\alpha(n,l)=1$ for each $l<1/2$. For all $n\in\mathbb{N}$ and $l<1/2$, we consider that
\begin{equation}
B_{n,l}\sim \text{GPFP}\left(\frac{1}{n}, \, 1, \, N=1, \, \frac{\alpha(n,l)}{B(1/2-l, 3/2)}, \, l \right).
\end{equation}
By Remark \ref{remark}, we have that
\begin{equation}\label{betainverse}
B_{n,l}^{-1} \sim \text{GPFP} \left(1,\, n,\, 1, \, \sqrt{\frac{1}{n}}\cdot\frac{\alpha(n,l)}{B(1/2-l, 3/2)}, \, 1-l\right).
\end{equation}
By Theorem \ref{Thm} and Corollary \ref{Cor}, we have that $(i)$ if $0\le l<1/2$ then $B_{n,l}^r \sim$ UI for $|r|\ge 1$, and $(ii)$ if $l<0$ then the condition \eqref{betainverse} implies that $B_{n,l}^{-r}=(B_{n,r}^{-1})^r \sim$ UI for $r\ge1$. For each $l<1/2$, we consider a random variable $B_l$ which follows the beta distribution $\beta_{1/2-l, 3/2}$. Recall that the beta distributions $\beta_{p,q}$ are defined in Example \ref{UIex} (2). By the w-closedness of UI, we have that $(i)$ if $0\le l <1/2$ then $B_l^r\sim$ UI for $|r|\ge 1$, and $(ii)$ if $l<0$ then $B_l^r \sim$ UI for $r\le -1$. 
\end{example}


\section{Non free infinite divisibility for GPFP distributions}
Let $(\mathcal{A},\phi)$ be a $C^\ast$-probability space and $X\in\mathcal{A}$ a random variable. The {\it $n$-th moment} $m_n=m_n(X)$ of $X$ is defined by the value $\phi(X^n)$. In particular, if $X\sim \mu$, we write $m_n(\mu)$ as the $n$-th moment of $X$ (or $\mu$). The {\it $n$-th free cumulant} $\kappa_n=\kappa_n(X)$ of $X$ is defined as the $n$-th coefficient of power series of the free cumulant transform $C_X(z)$. If $X\sim \mu$, we write $\kappa_n(\mu)$ as the $n$-th free cumulant of $X$ (or $\mu$). We have {\it moment-cumulant formula} as follows
\begin{equation}
\kappa_n=\sum_{\pi \in NC(n)}  \left(\prod_{V\in \pi} m_{|V|}\right) \mu(\pi,1_n), \qquad n\in \mathbb{N},
\end{equation}
where $NC(n)$ is the set of all non-crossing partitions of the finite set $\{1,\dots , n\}$, the symbol $1_n$ is the non-crossing partition which has the block $\{1,\dots, n\}$, the function $\mu(\pi,\sigma)$ $(\pi\le\sigma \text{ in } NC(n))$ is the M\"{o}bius function of $NC(n)$, and $|V|$ is the number of elements in a block $V$ of $\pi$ (for detail, see \cite{NS06}). By the moment-cumulant formula, we can compute free cumulants by using the moments. For example we have that
\begin{equation}\label{cumulant}
\begin{split}
\kappa_1&=m_1,\\
\kappa_2&=m_2-m_1^2,\\
\kappa_3&=m_3-3m_1m_2+2m_1^3,\\
\kappa_4&=m_4-4m_1m_3-2m_2^2+10m_1^2m_2-5m_1^4,\\
\kappa_5&=m_5-5m_1m_4-5m_2m_3+15m_1^2m_3+15m_1m_2^2-35m_1^3m_2+14m_1^5.
\end{split}
\end{equation}
The $n$-th free cumulant of ${\bf fp}(p)$ is given by $\kappa_n({\bf fp}(p))=p$ for all $n\in\mathbb{N}$ (see Example \ref{cumulanttrans} (2)). Then the formula \eqref{cumulant} yields that
\begin{equation}\label{moments}
\begin{split}
m_1({\bf fp}(p))&=p,\\
m_2({\bf fp}(p))&=p(p+1),\\
m_3({\bf fp}(p))&=p(p^2+3p+1),\\
m_4({\bf fp}(p))&=p(p^3+6p^2+6p+1),\\
m_5({\bf fp}(p))&=p(p^4+10p^3+20p^2+10p+1).
\end{split}
\end{equation}
We define moments with degree of a complex number. In particular, we need to study the moments of ${\bf fp}(p)$ with degree of a complex number:
\begin{equation}\label{msfp}
m_s({\bf fp}(p))=\int_0^\infty x^s {\bf fp}(p)(dx), \qquad s\in\mathbb{C},\, p>1.
\end{equation}
This is analytic as a function of $s\in\mathbb{C}$.
 
\begin{lemma}
For $s\in\mathbb{C}$ and $p>1$, we have that
\begin{equation}\label{mfp}
m_s({\bf fp}(p))=\frac{m_{-s-1}({\bf fp}(p))}{(p-1)^{-1-2s}}.
\end{equation}
\end{lemma}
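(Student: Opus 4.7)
The plan is to prove the identity by a direct change of variables in the integral \eqref{msfp}. Set $a=(\sqrt{p}-1)^2$ and $b=(\sqrt{p}+1)^2$; the crucial arithmetic observation is that
\begin{equation*}
ab=\bigl((\sqrt{p}-1)(\sqrt{p}+1)\bigr)^2=(p-1)^2,
\end{equation*}
so in particular $\sqrt{ab}=p-1>0$ (here the assumption $p>1$ is used). The substitution I would try is $y=ab/x=(p-1)^2/x$, since this is the natural involution of $(a,b)$ that swaps the endpoints $a\leftrightarrow b$ and because it is exactly the kind of reciprocal map that relates the moments $m_s$ and $m_{-s-1}$.

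First I would write out $m_s(\mathbf{fp}(p))$ explicitly from the density given in Example \ref{cumulanttrans}(2), namely
\begin{equation*}
m_s(\mathbf{fp}(p))=\int_a^b x^s\cdot\frac{\sqrt{(b-x)(x-a)}}{2\pi x}\,dx.
\end{equation*}
Under $y=ab/x$ one has $dx=-(ab/y^2)\,dy$, $x^s=(ab)^s y^{-s}=(p-1)^{2s}y^{-s}$, and the routine algebraic identities
\begin{equation*}
b-x=\frac{b(y-a)}{y},\qquad x-a=\frac{a(b-y)}{y},
\end{equation*}
give $(b-x)(x-a)=ab\,(b-y)(y-a)/y^2$, hence $\sqrt{(b-x)(x-a)}=(p-1)\sqrt{(b-y)(y-a)}/y$. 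Substituting these and flipping the limits of integration, all factors combine cleanly and the integral becomes
\begin{equation*}
m_s(\mathbf{fp}(p))=(p-1)^{2s+1}\int_a^b y^{-s-2}\cdot\frac{\sqrt{(b-y)(y-a)}}{2\pi}\,dy.
\end{equation*}

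Finally I would recognize that the right-hand integral is exactly $m_{-s-1}(\mathbf{fp}(p))$, since pulling one factor of $y^{-1}$ back into the density reproduces the free Poisson weight. Therefore
\begin{equation*}
m_s(\mathbf{fp}(p))=(p-1)^{2s+1}\,m_{-s-1}(\mathbf{fp}(p))=\frac{m_{-s-1}(\mathbf{fp}(p))}{(p-1)^{-1-2s}},
\end{equation*}
which is \eqref{mfp}. There is no real obstacle: the only mild care needed is justifying that the identity, established for real $s$ where the substitution is entirely real and positive quantities stay positive, extends to all $s\in\mathbb{C}$ by the stated analyticity of $m_s(\mathbf{fp}(p))$ in $s$ together with the identity theorem (both sides being entire functions of $s$ that agree on the real line).
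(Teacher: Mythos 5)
Your proof is correct and follows essentially the same route as the paper: the paper performs the substitution in two stages (first $x\mapsto x^{-1}$, then the rescaling $x\mapsto x/(p-1)^2$), while you compose them into the single involution $y=ab/x=(p-1)^2/x$ of $(a,b)$, which is the same computation. The concluding appeal to the identity theorem is harmless but not even needed, since for $x,y>0$ the identity $x^s=(ab)^s y^{-s}$ holds directly for all complex $s$.
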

\begin{proof}
Consider $s\in\mathbb{C}$ and $p>1$.  Then we have that
\begin{equation}
\begin{split}
m_s({\bf fp}(p))&=\int_{(\sqrt{p}-1)^2}^{(\sqrt{p}+1)^2} x^{s-1} \frac{\sqrt{\left((\sqrt{p}+1)^2-x\right)\left(x-(\sqrt{p}-1)^2\right)}}{2\pi}dx\\
&=(p-1)\int_{\frac{1}{(\sqrt{p}+1)^2}}^{\frac{1}{(\sqrt{p}-1)^2}} x^{-s-1}\frac{\sqrt{\left(\frac{1}{(\sqrt{p}-1)^2}-x\right)\left(x-\frac{1}{(\sqrt{p}+1)^2}\right)}}{2\pi x}dx\\
&=\frac{1}{(p-1)^{-1-2s}}\int_{(\sqrt{p}-1)^2}^{(\sqrt{p}+1)^2} x^{-s-1}\frac{\sqrt{\left((\sqrt{p}+1)^2-x\right)\left(x-(\sqrt{p}-1)^2\right)}}{2\pi}dx\\
&=\frac{m_{-s-1}({\bf fp}(p))}{(p-1)^{-1-2s}},
\end{split}
\end{equation}
where the second equality holds by changing the variables from $x$ to $x^{-1}$ and the third equality holds by changing the variables from $x$ to $\frac{x}{(p-1)^2}$.
\end{proof}

By Lemma \ref{msfp}, we have that for all $p>1$,
\begin{equation}\label{-1-2moment}
\begin{split}
m_{-1}({\bf fp}(p))&=\frac{m_0({\bf fp}(p))}{p-1}=\frac{1}{p-1},\\
m_{-2}({\bf fp}(p))&=\frac{m_1({\bf fp}(p))}{(p-1)^3}=\frac{p}{(p-1)^3}.
\end{split}
\end{equation}

We study a condition for the following measures to become a GPFP distribution.

\begin{lemma}\label{prob}
Let $p>1$ and $\alpha_1,\alpha_2>0$. \\
(1) The following measure
\begin{equation}\label{2}
\frac{\sqrt{\left(x-\frac{1}{(\sqrt{p}+1)^2}\right)\left(\frac{1}{(\sqrt{p}-1)^2}-x\right)}}{2\pi x^2}\left(\alpha_1+\frac{\alpha_2}{x}\right) 1_{(1/(\sqrt{p}+1)^2,1/(\sqrt{p}-1)^2)}(x)dx,
\end{equation}
is a probability measure on the positive real line if and only if $\alpha_1+\alpha_2 p=p-1$.\\\\
(2) The following measure
\begin{equation}\label{1}
\frac{\sqrt{\left((\sqrt{p}+1)^2-x\right)\left(x-(\sqrt{p}-1)^2\right)}}{2\pi x}\left(\alpha_1+\frac{\alpha_2}{x^2}\right) 1_{((\sqrt{p}-1)^2,(\sqrt{p}+1)^2)}(x)dx,
\end{equation}
is a probability measure on the positive real line if and only if $\alpha_1+\frac{p}{(p-1)^3}\alpha_2=1$.
\end{lemma}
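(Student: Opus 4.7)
The plan is to compute the total mass of each measure explicitly using the moments of ${\bf fp}(p)$ that are already tabulated in the paper. Because $\alpha_1,\alpha_2>0$, each density is manifestly nonnegative on its support, so the measure is a probability measure precisely when its total integral equals $1$.

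For part (2), since $p>1$ the free Poisson law ${\bf fp}(p)$ has no atom at $0$ and is supported on $((\sqrt{p}-1)^2,(\sqrt{p}+1)^2)$ with the density $\sqrt{((\sqrt{p}+1)^2-x)(x-(\sqrt{p}-1)^2)}/(2\pi x)$. Hence the measure in \eqref{1} equals $(\alpha_1+\alpha_2\,x^{-2})\,d{\bf fp}(p)(x)$, so its total mass is
\[
\alpha_1\, m_0({\bf fp}(p))+\alpha_2\, m_{-2}({\bf fp}(p))=\alpha_1+\frac{p}{(p-1)^3}\alpha_2,
\]
using $m_0({\bf fp}(p))=1$ and the value $m_{-2}({\bf fp}(p))=p/(p-1)^3$ from \eqref{-1-2moment}. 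Setting this equal to $1$ yields the stated equivalence.

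For part (1), I substitute $y=1/x$, which sends the support $(1/(\sqrt{p}+1)^2,\,1/(\sqrt{p}-1)^2)$ onto the free Poisson support $((\sqrt{p}-1)^2,\,(\sqrt{p}+1)^2)$. A direct algebraic manipulation, using $\sqrt{(\sqrt{p}-1)^2(\sqrt{p}+1)^2}=p-1$, gives
\[
\sqrt{\Bigl(x-\tfrac{1}{(\sqrt{p}+1)^2}\Bigr)\Bigl(\tfrac{1}{(\sqrt{p}-1)^2}-x\Bigr)}=\frac{\sqrt{((\sqrt{p}+1)^2-y)(y-(\sqrt{p}-1)^2)}}{(p-1)\,y},
\]
while $dx=-dy/y^2$ and $1/x^2=y^2$. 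After reversing the limits of integration to absorb the minus sign, the integral of \eqref{2} over its support reduces to
\[
\frac{1}{p-1}\int_{(\sqrt{p}-1)^2}^{(\sqrt{p}+1)^2}(\alpha_1+\alpha_2\, y)\,d{\bf fp}(p)(y)=\frac{\alpha_1+\alpha_2\, p}{p-1},
\]
using $m_0({\bf fp}(p))=1$ and $m_1({\bf fp}(p))=p$ from \eqref{moments}. Requiring this to equal $1$ gives $\alpha_1+\alpha_2\, p=p-1$, as claimed.

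The only real pitfall is the algebraic bookkeeping in (1): one must correctly extract the factor $(p-1)^{-1}$ from the rewritten square root and verify that the Jacobian $y^{-2}$ from $dx$ cancels the $y^2$ from $1/x^2$, so that the factor $(\alpha_1+\alpha_2/x)$ becomes $(\alpha_1+\alpha_2\, y)$ against the ${\bf fp}(p)$ density. Apart from this bookkeeping, the argument is a routine translation of both total-mass conditions into already-computed moments of ${\bf fp}(p)$.
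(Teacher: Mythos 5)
Your proposal is correct and follows essentially the same route as the paper: part (2) is the same direct expansion of the total mass as $\alpha_1 m_0({\bf fp}(p))+\alpha_2 m_{-2}({\bf fp}(p))$ using \eqref{-1-2moment}, and your explicit substitution $y=1/x$ in part (1) is exactly the paper's step of passing to the law of $X^{-1}$, just written out as a change of variables (and, if anything, slightly cleaner, since computing the total mass handles both implications at once rather than treating the converse as a separate remark).
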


\begin{proof}
(1) Assume that the measure \eqref{2} is a probability measure. Suppose that $X$ is a random variable which follows the probability measure \eqref{2}. Then we have
\begin{equation}\label{inverse}
X^{-1}\sim \frac{1}{p-1}\frac{\sqrt{\left(x-(\sqrt{p}-1)^2\right)\left((\sqrt{p}+1)^2-x\right)}}{2\pi x}(\alpha_1+\alpha_2 x)1_{((\sqrt{p}-1)^2,(\sqrt{p}+1)^2)}(x)dx.
\end{equation}
By our assumption the measure \label{inverse} is also a probability measure. Equivalently, we have
\begin{equation}
1=\frac{1}{p-1}\left(\alpha_1 m_0({\bf fp}(p))+\alpha_2  m_1({\bf fp}(p)) \right)=\frac{1}{p-1} (\alpha_1+\alpha_2 p).
\end{equation}
Therefore the relation $\alpha_1+\alpha_2 p=p-1$ holds. The inverse implication is clear.\\\\
(2) Let $\mu_{p,\alpha_1,\alpha_2}$ be the measure \eqref{1} on the positive real line. It is a probability measure if and only if
\begin{equation}
\begin{split}
1=\int_{(\sqrt{p}-1)^2}^{(\sqrt{p}+1)^2} \mu_{p,\alpha_1,\alpha_2}(dx)&=\alpha_1 m_0({\bf fp}(p))+\alpha_2 m_{-2}({\bf fp}(p)).
\end{split}
\end{equation} 
The calculation \eqref{-1-2moment} implies our conclusion.
\end{proof}

As one of criteria for non free infinite divisibility, we have the method of Hankel determinants of a sequence of free cumulants. First we find an example of parameters such that $X\sim \text{GPFP}(a,b,N,\alpha,l)$, $l\in[t,t+1]_<^N$ for some $t>0$ and $X^{-1}$ does not follow an FID distribution. Consider the following probability measure
\begin{equation}
\begin{split}
\sigma_{\alpha_1,\alpha_2}:&= \text{GPFP}\left(a=\frac{1}{(\sqrt{2}+1)^2}, \, b=\frac{1}{(\sqrt{2}-1)^2},\, N=2,\, \alpha=\left(\frac{\alpha_1}{2\pi},\frac{\alpha_2}{2\pi}\right),\, l=(1,2) \right)\\
&=\frac{\sqrt{\left(x-\frac{1}{(\sqrt{2}+1)^2}\right)\left(\frac{1}{(\sqrt{2}-1)^2}-x\right)}}{2\pi x}\left(\frac{\alpha_1}{x}+\frac{\alpha_2}{x^2}\right) 1_{(1/(\sqrt{2}+1)^2,1/(\sqrt{2}-1)^2)}(x)dx,
\end{split}
\end{equation}
where $\alpha_1,\alpha_2>0$ satisfy the relation $\alpha_1+2\alpha_2=1$ ($0<\alpha_2<1/2$) by Lemma \ref{prob} (1).

\begin{theorem}
Consider $(\alpha_1,\alpha_2)\in \{(x,y)\in(0,\infty)^2: x+2y=1\}$ and a noncommutative random variable $X$. If $X\sim \sigma_{\alpha_1,\alpha_2}$, then $X^{-1}$ does not follow an FID distribution.
\end{theorem}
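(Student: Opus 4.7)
The strategy is to apply Remark~\ref{remark} to express $X^{-1}$ explicitly as a GPFP whose density factors as a polynomial times the free Poisson density ${\bf fp}(2)$, compute its first four free cumulants via \eqref{cumulant}, and show that the Hankel determinant $\kappa_2\kappa_4-\kappa_3^2$ is strictly negative throughout $\alpha_2\in(0,1/2)$; this violates the Hankel positivity condition that follows from the free L\'{e}vy--Khintchine representation \eqref{LKrep}.

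Setting $a=1/(\sqrt{2}+1)^2$ and $b=1/(\sqrt{2}-1)^2$, one computes $\sqrt{ab}=1/((\sqrt{2}+1)(\sqrt{2}-1))=1$, so Remark~\ref{remark} gives
\begin{equation*}
X^{-1}\sim\frac{\sqrt{((\sqrt{2}+1)^2-x)(x-(\sqrt{2}-1)^2)}}{2\pi x}(\alpha_1+\alpha_2 x)\cdot 1_{((\sqrt{2}-1)^2,\,(\sqrt{2}+1)^2)}(x)\,dx.
\end{equation*}
The factor apart from $(\alpha_1+\alpha_2 x)$ is precisely the density of ${\bf fp}(2)$, hence
\begin{equation*}
m_n(X^{-1})=\alpha_1\,m_n({\bf fp}(2))+\alpha_2\,m_{n+1}({\bf fp}(2)).
\end{equation*}
Substituting $\alpha_1=1-2\alpha_2$ (from Lemma~\ref{prob}(1)) and the explicit values \eqref{moments} at $p=2$ turns each $m_n(X^{-1})$ into an affine polynomial in the single parameter $\alpha_2\in(0,1/2)$. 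Feeding $m_1,m_2,m_3,m_4$ into \eqref{cumulant} then expresses $\kappa_2,\kappa_3,\kappa_4$ as polynomials in $\alpha_2$: a direct computation gives $\kappa_2=2(1-\alpha_2)(1+2\alpha_2)$, $\kappa_3=-2(1-2\alpha_2)(4\alpha_2^2-\alpha_2-1)$, and a quartic expression for $\kappa_4$, which consolidate into
\begin{equation*}
\kappa_2\kappa_4-\kappa_3^2=-8\alpha_2(1-2\alpha_2)\,S(\alpha_2),\qquad S(\alpha_2):=4\alpha_2^4-4\alpha_2^3-5\alpha_2^2+2\alpha_2+1.
\end{equation*}

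To conclude, \eqref{LKrep} shows that for any compactly supported FID measure $\mu$ one has $\kappa_{n+2}(\mu)=\int_{\mathbb{R}}t^n\,d\rho(t)$ with $d\rho:=a\,\delta_0+t^2\,d\nu$ a finite measure, so the Hankel matrix $(\kappa_{i+j+2}(\mu))_{i,j\geq 0}$ is positive semidefinite and in particular $\kappa_2\kappa_4-\kappa_3^2\geq 0$ whenever $\mu$ is FID. Hence it suffices to prove $S(\alpha_2)>0$ on $(0,1/2)$, and this is the main (but elementary) obstacle: on $[0,1/2]$ the bounds $4\alpha_2^4\geq 0$ and $4\alpha_2^3\leq 2\alpha_2^2$ yield $S(\alpha_2)\geq 1+2\alpha_2-7\alpha_2^2\geq 1/4$. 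Therefore $\kappa_2\kappa_4-\kappa_3^2<0$ for every admissible $(\alpha_1,\alpha_2)$, contradicting FID of $X^{-1}$.
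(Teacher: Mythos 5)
Your proposal is correct and takes essentially the same route as the paper's own proof: invert via Remark \ref{remark} to write the density of $X^{-1}$ as $(\alpha_1+\alpha_2 x)$ times the ${\bf fp}(2)$ density, read off $m_n(X^{-1})=\alpha_1 m_n({\bf fp}(2))+\alpha_2 m_{n+1}({\bf fp}(2))$, compute $\kappa_2,\kappa_3,\kappa_4$ from \eqref{cumulant}, and conclude from negativity of $\kappa_2\kappa_4-\kappa_3^2$ on $0<\alpha_2<1/2$ (your cumulant formulas agree with the paper's after expansion). The one point where you go beyond the paper is the final step: the paper justifies the negativity only by pointing to a plot, whereas your identity $\kappa_2\kappa_4-\kappa_3^2=-8\alpha_2(1-2\alpha_2)S(\alpha_2)$ with $S(\alpha_2)=4\alpha_2^4-4\alpha_2^3-5\alpha_2^2+2\alpha_2+1\ge \tfrac14$ on $[0,\tfrac12]$ checks out algebraically and makes that step rigorous, as does your explicit derivation of the Hankel positivity criterion from \eqref{LKrep}.
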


\begin{proof}
By Remark \ref{remark}, we have
\begin{equation}
\begin{split}
X^{-1}&\sim \text{GPFP}\left((\sqrt{2}-1)^2,\, (\sqrt{2}+1)^2,\, 2, \, \left(\frac{\alpha_2}{2\pi}, \frac{\alpha_1}{2\pi}\right),\, (-1,0)\right)\\
&=\frac{\sqrt{\left(x-(\sqrt{2}-1)^2\right)\left((\sqrt{2}+1)^2-x\right)}}{2\pi x}(\alpha_1+\alpha_2 x)1_{((\sqrt{2}-1)^2,(\sqrt{2}+1)^2)}(x)dx.
\end{split}
\end{equation}
The relation between $\alpha_1$ and $\alpha_2$ and the above formulas \eqref{moments} yield that 
\begin{equation}\label{momentX1}
\begin{split}
m_1(X^{-1})&=(1-2\alpha_2)m_1({\bf fp}(2))+\alpha_2 m_2({\bf fp}(2))=2(1+\alpha_2),\\
m_2(X^{-1})&=(1-2\alpha_2)m_2({\bf fp}(2))+\alpha_2 m_3({\bf fp}(2))=2(3+5\alpha_2),\\
m_3(X^{-1})&=(1-2\alpha_2)m_3({\bf fp}(2))+\alpha_2 m_4({\bf fp}(2))=2(11+23\alpha_2),\\
m_4(X^{-1})&=(1-2\alpha_2)m_4({\bf fp}(2))+\alpha_2 m_5({\bf fp}(2))=2(45+107\alpha_2).\\
\end{split}
\end{equation}
The formula \eqref{cumulant} and the above calculations \eqref{momentX1} follow that
\begin{equation}
\begin{split}
\kappa_2:=\kappa_2(X^{-1})&=-2(2\alpha_2^2-\alpha_2-1),\\
\kappa_3:=\kappa_3(X^{-1})&=2(8\alpha_2^3-6\alpha_2^2-\alpha_2+1), \\
\kappa_4:=\kappa_4(X^{-1})&=-2(2\alpha_2-1)(20\alpha_2^3-10\alpha_2^2-3\alpha_2+1),
\end{split}
\end{equation}
where $\kappa_n$ is the $n$-th free cumulant of $X^{-1}$. This implies that the 2nd Hankel determinant of $\{\kappa_n\}_{n\ge 2}$:
\begin{equation}
   \left|
    \begin{array}{ccc}
      \kappa_{2} & \kappa_{3}  \\
      \kappa_{3} & \kappa_{4}
    \end{array}
  \right| =\kappa_2\kappa_4-\kappa_3^2,
\end{equation}
is negative for all $0<\alpha_2<1/2$ (see Figure \ref{hankel1}). Therefore $X^{-1}$ does not follow an FID distribution.
\end{proof}

\begin{figure}[htbp]
\begin{center}
  \begin{tabular}{c}

 \begin{minipage}{0.5\hsize}
      \centering 
        \includegraphics[clip, width=8cm]{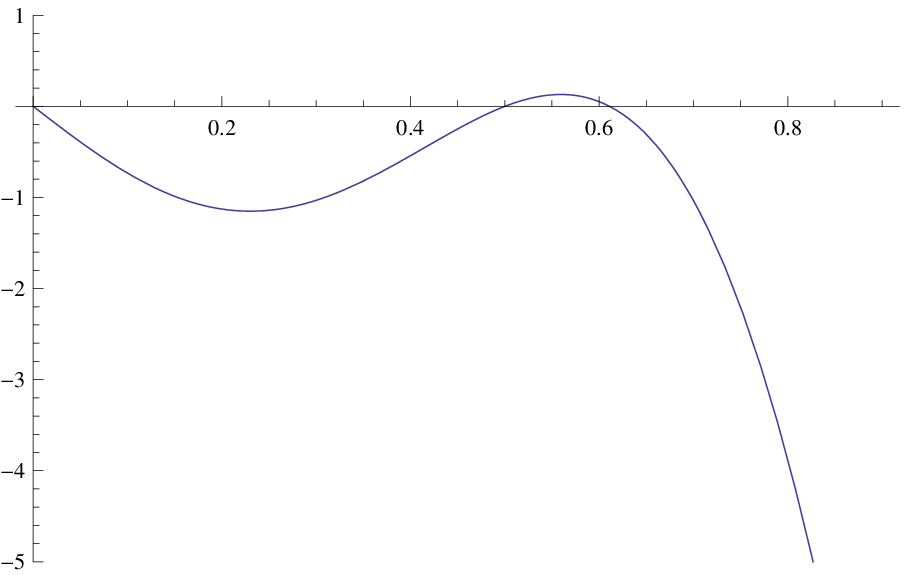}
        \caption{$\kappa_2\kappa_4-(\kappa_3)^2$ as a function of $\alpha_2$. It is negative when $0<\alpha_2<1/2$.}\label{hankel1}
    \end{minipage}

  \end{tabular}
  \label{fig:img}
  \end{center}
\end{figure}

Next we find an example of 5 parameters $(a,\, b,\, N,\, \alpha,\, l)$ with $N\ge 2$ and the width of $l$ is grater than one (in other words, we can find $1< k \le N$ with $l_1<\dots <l_{k-1}\le l_1+1< l_{k}<\dots <l_N$) such that $\text{GPFP}(a,\, b,\, N,\, \alpha,\, l)$ is not free infinitely divisible. Consider a probability measure $\eta=\eta_{\alpha_1,\alpha_2}$ as follows
\begin{equation}
\begin{split}
\eta_{\alpha_1,\alpha_2}:&=\text{GPFP}\left(a=(\sqrt{2}-1)^2,\, b=(\sqrt{2}+1)^2,\, N=2,\, \alpha=\left(\frac{\alpha_1}{2\pi},\frac{\alpha_2}{2\pi}\right),\, l=(0,2)\right)\\
&=\frac{\sqrt{\left((\sqrt{2}+1)^2-x\right)\left(x-(\sqrt{2}-1)^2\right)}}{2\pi x}\left(\alpha_1+\frac{\alpha_2}{x^2}\right)\cdot 1_{((\sqrt{2}-1)^2,(\sqrt{2}+1)^2)}(x)dx,
\end{split}
\end{equation}
where $\alpha_1,\alpha_2>0$ satisfy the relation $\alpha_1+2\alpha_2=1$ by Lemma \ref{prob} (2).

\begin{theorem}\label{counter}
There exists $(\alpha_1,\alpha_2)\in \{(x,y)\in(0,\infty)^2: x+2y=1\}$ such that the probability measure $\eta_{\alpha_1,\alpha_2}$ is not free infinitely divisible.
\end{theorem}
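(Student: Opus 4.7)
The plan is to replicate the Hankel-determinant obstruction used in the previous theorem, now applied directly to $\eta_{\alpha_1,\alpha_2}$. Recall that the free L\'{e}vy--Khintchine representation \eqref{LKrep} implies that for every FID measure $\mu$, the shifted cumulant sequence $(\kappa_{n+2}(\mu))_{n\geq 0}$ is the moment sequence of the finite positive measure $a\delta_0 + t^2\,d\nu(t)$ on $\mathbb{R}$, so all its Hankel determinants are nonnegative; in particular $\kappa_2\kappa_4 - \kappa_3^2 \geq 0$ is necessary. The goal is therefore to exhibit $(\alpha_1,\alpha_2)$ on the segment $\alpha_1 + 2\alpha_2 = 1$ for which this inequality is violated.

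The computation breaks into three mechanical steps. First, observe the decomposition
$$\eta_{\alpha_1,\alpha_2}(dx) = \alpha_1 \, {\bf fp}(2)(dx) + \alpha_2 \, x^{-2}\, {\bf fp}(2)(dx),$$
which immediately gives $m_n(\eta_{\alpha_1,\alpha_2}) = \alpha_1 m_n({\bf fp}(2)) + \alpha_2 m_{n-2}({\bf fp}(2))$ for all $n\geq 0$. Second, substitute the explicit values from \eqref{moments} and \eqref{-1-2moment} at $p = 2$, namely $m_{-2}({\bf fp}(2)) = 2$, $m_{-1}({\bf fp}(2)) = 1$, and $m_1 = 2$, $m_2 = 6$, $m_3 = 22$, $m_4 = 90$, and eliminate $\alpha_1$ via $\alpha_1 = 1 - 2\alpha_2$. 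This expresses $m_1, m_2, m_3, m_4$ of $\eta_{\alpha_1,\alpha_2}$ as affine polynomials in $\alpha_2$. Third, plug these into the moment-cumulant formulas \eqref{cumulant} to obtain explicit polynomials $\kappa_2(\alpha_2), \kappa_3(\alpha_2), \kappa_4(\alpha_2)$.

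The only real obstacle is verifying that the resulting polynomial $P(\alpha_2) := \kappa_2(\alpha_2)\kappa_4(\alpha_2) - \kappa_3(\alpha_2)^2$ is strictly negative for at least one $\alpha_2 \in (0,1/2)$. This is a one-variable polynomial inequality on a bounded interval: one evaluates $P$ at a convenient sample point, for instance $\alpha_2 = 1/4$ (so $\alpha_1 = 1/2$), and, provided the sign is negative there, displays a plot of $P$ on $(0,1/2)$ in the style of Figure \ref{hankel1} to visualize the range of bad parameters. Should the $2\times 2$ minor unexpectedly remain nonnegative, the fallback is to compute $\kappa_5, \kappa_6$ and examine the $3\times 3$ Hankel minor $\det(\kappa_{i+j+2})_{0\leq i,j\leq 2}$; by analogy with the structurally similar case of $\sigma_{\alpha_1,\alpha_2}$ treated just above, however, the $2\times 2$ minor is expected to witness non-FID already, which is all the theorem asserts.
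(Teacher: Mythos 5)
Your overall strategy is exactly the paper's: decompose $\eta_{\alpha_1,\alpha_2}=\alpha_1\,{\bf fp}(2)+\alpha_2\,x^{-2}{\bf fp}(2)$, get $m_n(\eta)=\alpha_1 m_n({\bf fp}(2))+\alpha_2 m_{n-2}({\bf fp}(2))$, eliminate $\alpha_1=1-2\alpha_2$, pass to free cumulants via \eqref{cumulant}, and show the $2\times 2$ Hankel determinant $\kappa_2\kappa_4-\kappa_3^2$ goes negative. Your justification of the Hankel criterion (that $(\kappa_{n+2})_{n\ge 0}$ is the moment sequence of $a\delta_0+t^2\,d\nu(t)$ for an FID law) is correct and in fact more explicit than what the paper writes. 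Your input data are also right: $m_{-2}({\bf fp}(2))=2$, $m_{-1}({\bf fp}(2))=1$, $m_1=2$, $m_2=6$, $m_3=22$, $m_4=90$.

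The gap is in the verification step: your proposed sample point $\alpha_2=1/4$ does not work. Carrying out the computation gives
\begin{equation*}
\kappa_2=2+\alpha_2-9\alpha_2^2,\quad
\kappa_3=2+6\alpha_2+9\alpha_2^2-54\alpha_2^3,\quad
\kappa_4=2+10\alpha_2+34\alpha_2^2+90\alpha_2^3-405\alpha_2^4,
\end{equation*}
and at $\alpha_2=1/4$ one finds $\kappa_2=27/16$, $\kappa_3=103/32$, $\kappa_4=1651/256$, hence $\kappa_2\kappa_4-\kappa_3^2=2141/4096>0$. The determinant is negative only on roughly $0<\alpha_2\lessapprox 0.158$ (the paper's Figure~\ref{hankel2}), so the obstruction disappears well before $\alpha_2=1/4$. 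Your hedge (``provided the sign is negative there'') means the written argument stalls at this point, and your fallback of computing the $3\times 3$ minor at $\alpha_2=1/4$ is both unproven and unnecessary. The fix is simply to evaluate at a smaller parameter: for instance $\alpha_2=0.15$ (the paper's choice $\eta_{0.7,0.15}$) gives $\kappa_2\kappa_4-\kappa_3^2\approx -0.0295<0$, and $\alpha_2=0.1$ gives $\approx -0.136<0$, either of which completes the proof.
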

\begin{proof}
The relation between $\alpha_1$ and $\alpha_2$ and the above formulas \eqref{moments} and \eqref{-1-2moment} yield that 
\begin{equation}\label{momentmu}
\begin{split}
m_1(\eta)&=(1-2\alpha_2)m_1({\bf fp}(2))+\alpha_2 m_{-1}({\bf fp}(2))=2-3\alpha_2,\\
m_2(\eta)&=(1-2\alpha_2)m_2({\bf fp}(2))+\alpha_2 m_0({\bf fp}(2))=6-11\alpha_2,\\
m_3(\eta)&=(1-2\alpha_2)m_3({\bf fp}(2))+\alpha_2 m_1({\bf fp}(2))=2(11-21\alpha_2),\\
m_4(\eta)&=(1-2\alpha_2)m_4({\bf fp}(2))+\alpha_2 m_2({\bf fp}(2))=6(15-29\alpha_2).
\end{split}
\end{equation}
The formula \eqref{cumulant} and the above calculations \eqref{momentmu} imply that
\begin{equation}
\begin{split}
\kappa_2':=\kappa_2(\eta)&=-9\alpha_2^2+\alpha_2+2,\\
\kappa_3':=\kappa_3(\eta)&=-54\alpha_2^3+9\alpha_2^2+6\alpha_2+2, \\
\kappa_4':=\kappa_4(\eta)&=-405\alpha_2^4+90\alpha_2^3+34\alpha_2^2+10\alpha_2+2.
\end{split}
\end{equation}
This implies that the 2nd Hankel determinant of $\{\kappa'_n\}_{n\ge 2}$, is negative if $0<\alpha_2\preceq 0.157781$ (see Figure \ref{hankel2}). For example, the probability measure $\eta_{0.7,0.15}$ is not free infinitely divisible.
\end{proof}

\begin{figure}[htbp]
\begin{center}
  \begin{tabular}{c}

 \begin{minipage}{0.45\hsize}
      \centering 
        \includegraphics[clip, width=5.5cm]{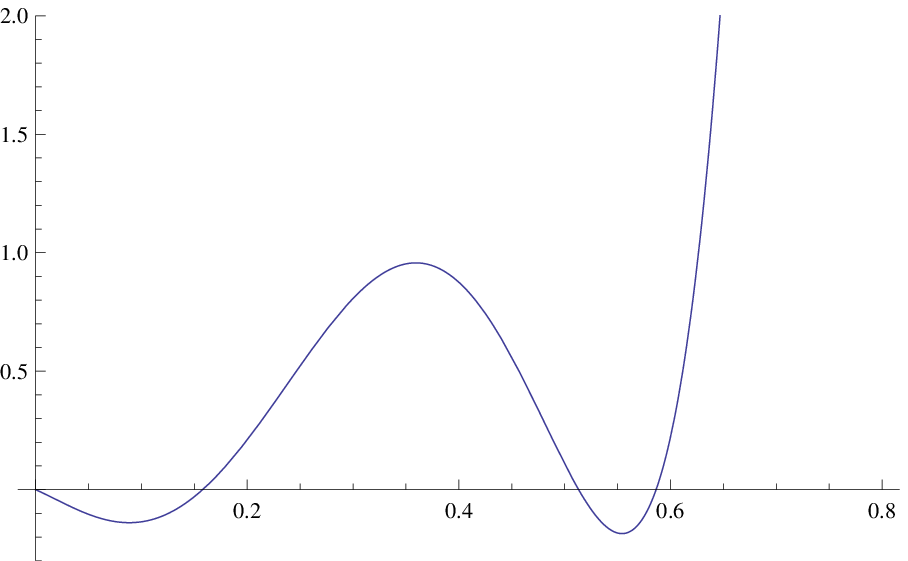}
        \caption{$\kappa'_2\kappa'_4-(\kappa'_3)^2$ as a function of $\alpha_2$. It is negative when $0<\alpha_2\preceq 0.157781$.}\label{hankel2}
    \end{minipage}

    \begin{minipage}{0.4\hsize}
      \centering 
        \includegraphics[clip, width=5.2cm]{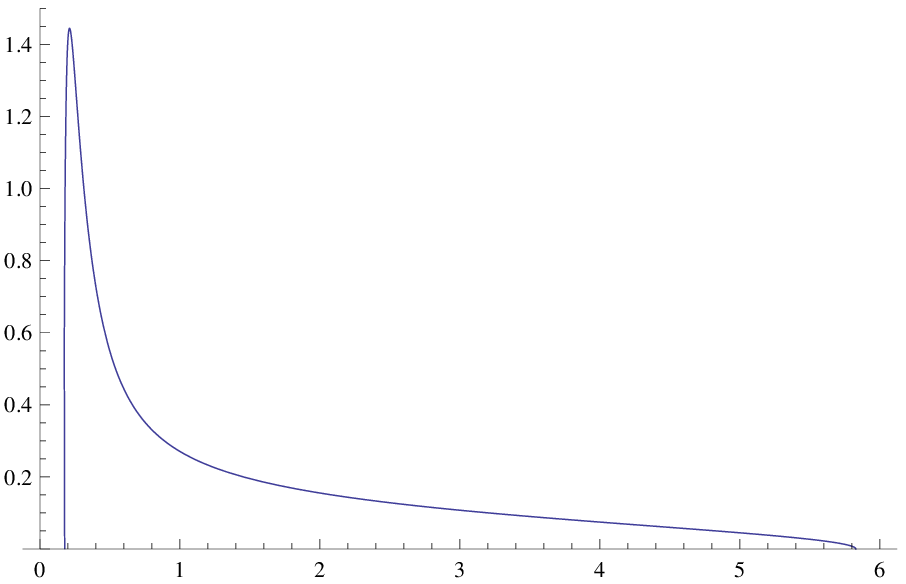}
        \caption{$\eta_{0.7,0.15}$}
    \end{minipage}

  \end{tabular}
  \label{fig:img}
  \end{center}
\end{figure}

\section*{Acknowledgement}
Both authors would like to express his hearty thanks to Professor Takahiro Hasebe who is their advisor in Hokkaido university.  In particular, thanks to his important advices, the authors could advance their researches. This research is supported by JSPS Grant-in-Aid for Young Scientists (B) 15K17549, by Grant-in-Aid for Scientific Research (B) 18H01115 and by JSPS and MAEDI under the Japan-France Integrated Action Program (SAKURA).

\vspace{0.6cm}
{\it \hspace{-5mm}Junki Morishita\\
Department of Mathematics, Hokkaido University,\\
Kita 10, Nishi 8, Kita-Ku, Sapporo, Hokkaido, 060-0810, Japan\\
email: jmorishita@math.sci.hokudai.ac.jp}\\\\
{\it Yuki Ueda\\
Department of Mathematics, Hokkaido University,\\
Kita 10, Nishi 8, Kita-Ku, Sapporo, Hokkaido, 060-0810, Japan\\
email: yuuki1114@math.sci.hokudai.ac.jp}
\end{document}